\documentclass{amsart}
\usepackage{latexsym}
\usepackage{amstext}
\usepackage{amsmath}
\usepackage{amssymb}
\usepackage{amsthm}
\usepackage{url}
\usepackage{tikz-cd}

\theoremstyle{plain}
\newtheorem{theorem}{Theorem}[section]

\newtheorem{proposition}[theorem]{Proposition}

\newtheorem{corollary}[theorem]{Corollary}
\newtheorem{def-thm}[theorem]{Definition-Theorem}
\newtheorem{lemma}[theorem]{Lemma}

\theoremstyle{definition}
\newtheorem{definition}[theorem]{Definition}

\newtheorem{remark}[theorem]{Remark}

\def \O {\mathcal{O}}
\def \II {\mathcal{I}}
\def \SS {\mathcal{S}}
\def \FF {\mathcal{F}}
\def \LL {\mathcal{L}}

\def \BB {\mathcal{B}}
\def \bt {\mathbf{t}}

\DeclareMathOperator{\dv}{div}

\DeclareMathOperator{\codim}{codim}
\DeclareMathOperator{\Proj}{Proj}

\allowdisplaybreaks
\hfuzz50pc
\vfuzz50pc
\sloppy
\setlength{\parskip}{.066 in}

\begin{document}

\title[A generalized Schmidt subspace theorem]{A generalized Schmidt subspace theorem for closed subschemes}

\author{Gordon Heier}\author{Aaron Levin}
\address{Department of Mathematics\\University of Houston\\4800 Calhoun Road\\ Houston, TX 77204\\USA}
\email{heier@math.uh.edu}

\address{Department of Mathematics\\Michigan State University\\619 Red Cedar Road\\ East Lansing, MI 48824\\USA}
\email{adlevin@math.msu.edu}
\thanks{The second author was supported in part by NSF grant DMS-1102563.}

\subjclass[2010]{11G35, 11G50, 11J87, 14C20, 14E05, 14G40, 32H30}

\keywords{Schmidt's subspace theorem, Roth's theorem, Diophantine approximation, closed subschemes, Seshadri constants}

\begin{abstract}
We prove a generalized version of Schmidt's subspace theorem for closed subschemes in general position in terms of suitably defined Seshadri constants with respect to a fixed ample divisor. Our proof builds on previous work of Evertse and Ferretti, Corvaja and Zannier, and others, and uses standard techniques from algebraic geometry such as notions of positivity, blowing-ups and direct image sheaves. As an application, we recover a higher-dimensional Diophantine approximation theorem of K.~F.~Roth-type due to D.~McKinnon and M.~Roth with a significantly shortened proof, while simultaneously extending the scope of the use of Seshadri constants in this context in a natural way.
\end{abstract}

\maketitle

\section{Introduction}
In the theory of higher-dimensional Diophantine approximation, Schmidt's subspace theorem has inspired a long series of generalizations by many authors. Among these generalizations is the following theorem due to Evertse and Ferretti \cite{ef_festschrift} (see also the related results of Corvaja and Zannier \cite{CZ}).

\begin{theorem}[Evertse-Ferretti]
\label{EF}
Let $X$ be a projective variety of dimension $n$ defined over a number field $k$.  Let $S$ be a finite set of places of $k$.  For each $v\in S$, let $D_{0,v},\ldots, D_{n,v}$ be effective Cartier divisors on $X$, defined over $k$, in general position.  Suppose that there exists an ample Cartier divisor $A$ on $X$ and positive integers $d_{i,v}$ such that $D_{i,v}\sim d_{i,v}A$ for all $i$ and for all $v\in S$.  Let $\epsilon>0$.  Then there exists a proper Zariski-closed subset $Z\subset X$ such that for all points $P\in X(k)\setminus Z$,
\begin{equation}
\label{EFeq}
\sum_{v\in S}\sum_{i=0}^n \frac{\lambda_{D_{i,v},v}(P)}{d_{i,v}}< (n+1+\epsilon)h_A(P).
\end{equation}
\end{theorem}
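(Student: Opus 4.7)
The plan is to derive the theorem from the classical Schmidt subspace theorem by embedding $X$ into projective space via a high multiple of $A$ and, for each $v \in S$, producing a basis of $H^0(X, NA)$ whose elements have controlled vanishing along the divisors $D_{i,v}$.

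Fix a large integer $N$, to be chosen at the end in terms of $\epsilon$, and set $V_N = H^0(X, NA)$, $d_N = \dim V_N$. By asymptotic Riemann--Roch, $d_N = \tfrac{A^n}{n!} N^n + O(N^{n-1})$. The linear system $|NA|$ yields a morphism $\phi_N \colon X \to \mathbb{P}(V_N^\vee)$, and $h_{NA}$ agrees with the pullback of the standard projective height up to $O(1)$. For each $v \in S$, introduce the multi-filtration of $V_N$ given by
\[ F_{v,\mathbf{c}} = H^0\!\Bigl(X,\,NA - \textstyle\sum_{i=0}^n c_i D_{i,v}\Bigr), \qquad \mathbf{c} \in \mathbb{Z}_{\geq 0}^{n+1}, \]
whose pieces, using the hypothesis $D_{i,v} \sim d_{i,v} A$, have dimension $\dim H^0(X, (N - \sum_i c_i d_{i,v})A)$.

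\textbf{Construction of adapted bases.} The aim is to produce, for each $v$, an ordered basis $s_{v,1}, \dots, s_{v,d_N}$ of $V_N$ with exponent vectors $\mathbf{a}_{v,j}$ such that $s_{v,j} \in F_{v,\mathbf{a}_{v,j}}$ (so $s_{v,j}$ vanishes along $D_{i,v}$ to order at least $a_{v,j,i}$), and such that for each $i$,
\[ \sum_{j=1}^{d_N} a_{v,j,i} \;\geq\; \frac{N\, d_N}{(n+1)\,d_{i,v}}\bigl(1 - o_N(1)\bigr). \]
The strategy is to fix a total refinement of the partial order on multi-indices $\mathbf{c}$ and extract a basis adapted to this refinement, in the spirit of Corvaja--Zannier; the summed weight then reduces to $\sum_k \dim H^0(X,(N - kd_{i,v})A)$, evaluated asymptotically by Riemann--Roch. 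Producing a \emph{single} basis that is simultaneously adapted to all $n+1$ filtration directions, with the symmetric weight bound above, is the technical heart of the proof and is precisely where the general-position hypothesis is indispensable: it guarantees that $\dim F_{v,\mathbf{c}}$ matches its numerical prediction, so that no direction of the multi-filtration is wasted on redundant vanishing conditions.

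\textbf{Application of Schmidt and conclusion.} Each $s_{v,j}$ defines a hyperplane $H_{v,j} \subset \mathbb{P}(V_N^\vee)$, and because $\{s_{v,j}\}_j$ is a basis, these $d_N$ hyperplanes are in general position. The classical Schmidt subspace theorem applied to the family $\{H_{v,j}\}_{v \in S,\, j = 1,\dots,d_N}$ yields a proper Zariski-closed subset $Z' \subset \mathbb{P}(V_N^\vee)$ such that for all $x \in \mathbb{P}(V_N^\vee)(k)\setminus Z'$,
\[ \sum_{v \in S} \sum_{j=1}^{d_N} \lambda_{H_{v,j}, v}(x) \;\leq\; (d_N + \epsilon')\, h(x) + O(1). \]
Pulling back via $\phi_N$ and applying the functoriality of Weil functions to $(s_{v,j})_0 \geq \sum_i a_{v,j,i} D_{i,v}$ gives
\[ \lambda_{H_{v,j},v}(\phi_N(P)) \;\geq\; \sum_{i=0}^n a_{v,j,i}\, \lambda_{D_{i,v},v}(P) + O(1) \]
for $P \in X(k)$. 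Combining these, inserting the weight estimate and $h_{NA} = N h_A + O(1)$, and dividing through by $\tfrac{N d_N}{n+1}$ produces
\[ \sum_{v \in S} \sum_{i=0}^n \frac{\lambda_{D_{i,v},v}(P)}{d_{i,v}} \;\leq\; (n+1)\cdot \tfrac{d_N+\epsilon'}{d_N}\,(1 + o_N(1))\, h_A(P) + O(1), \]
which is $\leq (n+1+\epsilon) h_A(P)$ once $N$ is taken large and $\epsilon'$ small, completing the argument. Apart from the combinatorial construction in the middle paragraph, every step is a mechanical combination of Schmidt's theorem with standard algebraic-geometric bookkeeping.
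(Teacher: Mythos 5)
Note first that the paper does not prove Theorem \ref{EF}; it is cited as a known result of Evertse and Ferretti, so this review compares your argument against the proof strategy in the literature rather than against a proof in the paper.

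Your overall framework is the right one: embed $X$ via $|NA|$, filter $H^0(X,NA)$ by vanishing along the $D_{i,v}$, extract adapted bases, and reduce to the classical Schmidt subspace theorem. This is essentially the Corvaja--Zannier / Evertse--Ferretti strategy, and the final bookkeeping (Schmidt applied to the hyperplanes $H_{v,j}$, functoriality of Weil functions, $h_{NA}=Nh_A+O(1)$) is fine as sketched.

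The gap is in the middle paragraph, which you yourself flag as the technical heart. You assert that a basis extracted from a \emph{single} total-order refinement of the multi-filtration satisfies the symmetric weight bound
\[
\sum_{j=1}^{d_N} a_{v,j,i} \;\geq\; \frac{N\,d_N}{(n+1)\,d_{i,v}}\bigl(1-o_N(1)\bigr)
\]
for \emph{every} $i=0,\dots,n$ simultaneously. But for any basis $\{s_j\}$ of $V_N$ one has
\[
\sum_j \operatorname{ord}_{D_{i,v}}(s_j) \;\leq\; \sum_{m\geq 1}\dim H^0(X,NA-mD_{i,v}),
\]
with equality precisely when the basis is adapted to the single filtration $m\mapsto H^0(X,NA-mD_{i,v})$. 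Near-equality for all $n+1$ directions at once would require a basis simultaneously adapted to $n+1$ filtrations of $V_N$; this does not exist in general for three or more filtrations, and a basis adapted to a total-order refinement of the joint filtration is \emph{not} automatically adapted to each marginal filtration --- the assigned level $a_{v,j,i}$ typically undersells the true order of vanishing in direction $i$, and the sums need not approach the optimum. General position of the $D_{i,v}$ controls $\dim F_{v,\mathbf{c}}$ itself but does not, in any evident way, control the relative position of the marginal filtrations inside $V_N$, so it does not rescue the claim as stated. You would need either a proof of this simultaneity or a counterexample-proof workaround, and you give neither.

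The standard proof sidesteps this by \emph{not} seeking a universal basis per place. Instead, one takes a finite family of filtrations and adapted bases, indexed by weight vectors $\mathbf{t}\in\Delta$ (or equivalently by choices of $n$-element subsets of $\{0,\dots,n\}$). General position enters at the level of points: because $\bigcap_{i=0}^n D_{i,v}=\emptyset$, for each $P$ and $v$ at most $n$ of the local heights $\lambda_{D_{i,v},v}(P)$ are large. One partitions $X(k)$ according to which $n$ divisors are approached, uses the basis concentrated on those $n$ directions for that piece, and applies Schmidt once per choice of weight vectors, taking a finite union of exceptional sets. That maneuver --- a family of bases plus a case distinction on $P$ rather than one basis with symmetric weights --- is exactly what your outline is missing, and without it the conclusion does not follow from the ingredients you assemble.
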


Here, $\lambda_{D_{i,v},v}$ is a local height function (also known as a Weil function) associated to the divisor $D_{i,v}$ and place $v$ in $S$, and $h_A$ is a global (absolute) height associated to $A$.  We will also use throughout the theory of heights and local heights associated to closed subschemes (see \cite{silverman_87}).

A key condition in the above theorem is the required linear equivalence $D_{i,v}\sim d_{i,v}A$ for all $i$ and for all $v\in S$. It is very natural to seek versions of this theorem in which this condition is suitably weakened. In this spirit, the 
second author in \cite{levin_duke} proved the following generalization of Evertse and Ferretti's theorem to ample divisors which are numerically equivalent up to a constant.

\begin{theorem}[Levin]
\label{EFnum}
Let $X$ be a projective variety of dimension $n$ defined over a number field $k$.  Let $S$ be a finite set of places of $k$.  For each $v\in S$, let $D_{0,v},\ldots, D_{n,v}$ be effective Cartier divisors on $X$, defined over $k$, in general position.  Suppose that there exists an ample Cartier divisor $A$ on $X$ and positive integers $d_{i,v}$ such that $D_{i,v}\equiv d_{i,v}A$ for all $i$ and for all $v\in S$.  Let $\epsilon>0$.  Then there exists a proper Zariski-closed subset $Z\subset X$ such that for all points $P\in X(k)\setminus Z$,
\begin{equation*}
\sum_{v\in S}\sum_{i=0}^n \frac{\lambda_{D_{i,v},v}(P)}{d_{i,v}}< (n+1+\epsilon)h_A(P).
\end{equation*}
\end{theorem}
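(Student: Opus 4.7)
The plan is to deduce Theorem~\ref{EFnum} from Theorem~\ref{EF} by perturbing each $D_{i,v}$ to a divisor that is genuinely linearly equivalent to a multiple of $A$, absorbing the cost of the perturbation into the $\epsilon$ term.

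The first step is an algebraic reduction to the case $E_{i,v} := D_{i,v} - d_{i,v}A \in \Pic^0(X)$. Since $\NS(X)$ is finitely generated, its torsion subgroup has some finite exponent $N_0$; the inequality to be proved is invariant under the scaling $(D_{i,v},d_{i,v})\mapsto(N_0 D_{i,v},N_0 d_{i,v})$, so this reduction is free.

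The second step is the key geometric input: there exists an integer $K_0$, depending only on $X$ and $A$, such that $\O(KA + L)$ is very ample for every $L \in \Pic^0(X)$ and every $K \geq K_0$. This uniformity (crucially, $K_0$ independent of $L$) follows from Serre-type vanishing applied to the Poincaré bundle on $X \times \Pic^0(X)$ together with the projectivity of $\Pic^0(X)$ as an abelian variety. With this in hand, fix $K := K_0$ and a large integer $M$ to be chosen. Using Bertini, pick for each $i,v$ a general effective divisor $H_{i,v} \in |KA - M E_{i,v}|$, so that the divisors
\[
\tilde D_{i,v} := M D_{i,v} + H_{i,v} \sim (M d_{i,v} + K)A
\]
remain in general position for each $v$. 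Theorem~\ref{EF} applied to $\{\tilde D_{i,v}\}$, combined with the standard nonnegativity convention $\lambda_{H_{i,v},v}(P) \geq -O(1)$ for effective $H_{i,v}$, then yields, for $P$ outside a proper Zariski-closed subset $Z$,
\[
\sum_{v \in S}\sum_{i=0}^n \frac{\lambda_{D_{i,v},v}(P)}{d_{i,v} + K/M} < \Bigl(n+1+\tfrac{\epsilon}{2}\Bigr) h_A(P) + O(1).
\]

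To upgrade from denominator $d_{i,v} + K/M$ back to $d_{i,v}$, I would invoke the Néron--Tate estimate $|h_{E_{i,v}}(P)| = O(h_A(P)^{1/2})$ for $E_{i,v} \in \Pic^0(X)$ (a Cauchy--Schwarz bound for canonical heights on the Albanese), combined with the crude upper bound $\lambda_{D_{i,v},v}(P) \leq d_{i,v} h_A(P) + O(h_A(P)^{1/2})$ that comes from summing $\lambda_{D_{i,v},w}$ over all places $w$. The defect between the two denominators then costs only $O((K/M) h_A(P))$, which can be made smaller than $(\epsilon/2) h_A(P)$ by taking $M$ sufficiently large. Concatenating gives the desired inequality outside a (possibly enlarged) proper Zariski-closed subset.

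The hard part, in my view, is the uniform very-ampleness claim of the second step: if the required $K$ depended on $L$ and grew with $M$ when applied to $L = -M E_{i,v}$, then the ratio $K/M$ could not be driven to zero and the whole reduction would collapse. The abelian-variety structure on $\Pic^0(X)$, and in particular its projectivity, is precisely what makes this uniformity available. A secondary technicality is arranging the $\tilde D_{i,v}$ to lie in general position, which is handled by a generic Bertini-type choice of the auxiliary divisors $H_{i,v}$ within their very ample linear systems.
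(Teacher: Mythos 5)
Your overall strategy --- perturb each $D_{i,v}$ by an auxiliary effective divisor so as to land in the linear-equivalence setting of Theorem~\ref{EF}, then let the perturbation be absorbed into the $\epsilon$ --- is the same core idea the paper uses: it deduces Theorem~\ref{EFnum} from Corollary~\ref{mthm_codim_1} by taking $c_{i,v}=1/d_{i,v}$ (so that $A-c_{i,v}D_{i,v}\equiv 0$ is nef), and the proof of Theorem~\ref{mthm} builds the auxiliary divisor as $\dv(s)$ for $s$ a general section of a high multiple of the ample $\mathbb{Q}$-divisor $(1+\epsilon)A-(1/d_{i,v}+\epsilon')D_{i,v}$. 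However, your execution has a genuine gap precisely at the step you flag as ``the hard part.'' You assert a uniform $K_0$, depending only on $X$ and $A$, such that $KA+L$ is very ample for every $L\in\Pic^0(X)$ and $K\geq K_0$, and you justify this via projectivity of $\Pic^0(X)$ as an abelian variety. But $X$ is only assumed to be a projective variety --- not smooth, not even normal --- and for such $X$ the identity component of the Picard scheme need not be proper (for a cuspidal plane cubic it is $\mathbb{G}_a$, for a nodal cubic $\mathbb{G}_m$). The Poincar\'e-bundle compactness argument therefore does not apply, and as written the uniformity claim is unjustified in the generality required.

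The good news is that no such uniformity is needed, and neither is the preliminary scaling by the exponent of $\NS(X)_{\mathrm{tors}}$ nor the closing N\'eron--Tate estimate (which again presumes $\Pic^0$ is an abelian variety). Since $E_{i,v}=D_{i,v}-d_{i,v}A$ is numerically trivial, $-E_{i,v}$ is nef, and ``ample plus nef is ample'' (Nakai--Moishezon, valid for arbitrary projective schemes) gives that $\gamma A-E_{i,v}$ is an ample $\mathbb{Q}$-divisor for \emph{every} rational $\gamma>0$. Hence some integer multiple $N(\gamma A-E_{i,v})$ is very ample; setting $K=N\gamma$, $M=N$ produces $KA-ME_{i,v}$ very ample with $K/M=\gamma$ as small as you like, for each fixed $i,v$ separately. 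For the final upgrade of the denominator, the N\'eron--Tate bound $|h_{E_{i,v}}|=O(h_A^{1/2})$ is overkill: normalizing $\lambda_{D_{i,v},v}\geq 0$ and using $1/d_{i,v}\leq(1+K/M)/(d_{i,v}+K/M)$ (valid since $d_{i,v}\geq 1$) turns your intermediate inequality directly into $\sum_{v,i}\lambda_{D_{i,v},v}(P)/d_{i,v}<(1+K/M)(n+1+\epsilon/2)h_A(P)+O(1)$, after which one chooses $K/M$ small and drops the finitely many points of bounded height. With these two repairs the argument closes, and becomes essentially identical to the paper's.
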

In order to further relax the assumptions of the above theorem, we incorporate the concept of Seshadri constants to measure positivity. Moreover, invoking Seshadri constants naturally allows for the divisors $D_{i,v}$ to be replaced with closed subschemes of arbitrary dimension. We obtain the following theorem generalizing the Schmidt subspace theorem, which is the main theorem of this note.

\begin{theorem}
\label{mthm}
Let $X$ be a projective variety of dimension $n$ defined over a number field $k$.  Let $S$ be a finite set of places of $k$.  For each $v\in S$, let $Y_{0,v},\ldots, Y_{n,v}$ be closed subschemes of $X$, defined over $k$, and in general position.  Let $A$ be an ample Cartier divisor on $X$, and $\epsilon>0$.  Then there exists a proper Zariski-closed subset $Z\subset X$ such that for all points $P\in X(k)\setminus Z$,
\begin{equation*}
\sum_{v\in S}\sum_{i=0}^n \epsilon_{Y_{i,v}}(A) \lambda_{Y_{i,v},v}(P)< (n+1+\epsilon)h_A(P).
\end{equation*}
\end{theorem}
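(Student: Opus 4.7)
The plan is to reduce Theorem \ref{mthm} to the divisor-theoretic Theorem \ref{EFnum} by approximating each closed subscheme $Y_{i,v}$ with an effective Cartier divisor $D_{i,v}\sim NA$ whose order of vanishing along $Y_{i,v}$ matches the Seshadri rate $\epsilon_{Y_{i,v}}(A)$ up to arbitrarily small slack. Set $\gamma_{i,v}:=\epsilon_{Y_{i,v}}(A)$; terms with $\gamma_{i,v}=0$ contribute nothing to the left-hand side, so one may assume all $\gamma_{i,v}>0$. Fix a small $\delta>0$, to be shrunk later in terms of $\epsilon$, and let $N$ denote a sufficiently divisible positive integer, common to all pairs $(i,v)$.

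For each pair $(i,v)$, let $\pi_{i,v}\colon X_{i,v}\to X$ be the blow-up of $X$ along $Y_{i,v}$, with exceptional Cartier divisor $E_{i,v}$. By the defining property of the Seshadri constant, $\pi_{i,v}^*A-(\gamma_{i,v}-\delta)E_{i,v}$ is ample on $X_{i,v}$, so, setting $M_{i,v}:=\lfloor(\gamma_{i,v}-\delta)N\rfloor$, the line bundle $\O_{X_{i,v}}(N\pi_{i,v}^*A-M_{i,v}E_{i,v})$ has many global sections for $N$ large. Via the direct-image identification
\[
H^0\bigl(X_{i,v},\,N\pi_{i,v}^*A-M_{i,v}E_{i,v}\bigr)\ \cong\ H^0\bigl(X,\,(\pi_{i,v})_*\O_{X_{i,v}}(N\pi_{i,v}^*A-M_{i,v}E_{i,v})\bigr),
\]
these sections correspond to effective Cartier divisors $D_{i,v}\sim NA$ on $X$ vanishing along $Y_{i,v}$ to multiplicity at least $M_{i,v}$. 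A Bertini-type genericity argument inside this (large) linear subsystem lets one arrange that $D_{0,v},\dots,D_{n,v}$ are in general position on $X$ for each $v\in S$. The standard functoriality of local heights for closed subschemes then yields
\[
\lambda_{D_{i,v},v}(P)\ \ge\ M_{i,v}\,\lambda_{Y_{i,v},v}(P)+O(1).
\]

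Next I would apply Theorem \ref{EFnum} to the $D_{i,v}$ (all linearly equivalent to $NA$, so $d_{i,v}=N$) with auxiliary slack $\epsilon/2$. This produces a proper Zariski-closed $Z\subset X$ outside which
\[
\sum_{v\in S}\sum_{i=0}^n \frac{\lambda_{D_{i,v},v}(P)}{N}\ <\ \bigl(n+1+\tfrac{\epsilon}{2}\bigr)h_A(P).
\]
Substituting the height comparison (divided by $N$) gives
\[
\sum_{v\in S}\sum_{i=0}^n(\gamma_{i,v}-\delta)\lambda_{Y_{i,v},v}(P)\ <\ \bigl(n+1+\tfrac{\epsilon}{2}\bigr)h_A(P)+O(1).
\]
The $\delta$-error is absorbed using the crude bound $\lambda_{Y_{i,v},v}(P)\le C_{i,v}\,h_A(P)+O(1)$, valid off $\Supp Y_{i,v}$ and obtained from the fact that some tensor power of $\II_{Y_{i,v}}$ is globally generated by sections of a multiple of $A$. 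Choosing $\delta$ small enough relative to $\epsilon$, $|S|$ and $n$ then yields the stated inequality with slope $n+1+\epsilon$.

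The main obstacle is the general-position condition on the enlarged divisors $D_{i,v}$: since each $D_{i,v}$ contains $Y_{i,v}$ as a subscheme, general position is not automatic from the assumed general position of the $Y_{i,v}$, and one must choose the divisor carefully inside its linear subsystem. For $N$ large the dimension of $H^0(X_{i,v},N\pi_{i,v}^*A-M_{i,v}E_{i,v})$ grows like $N^n/n!$ times the volume of $\pi_{i,v}^*A-(\gamma_{i,v}-\delta)E_{i,v}$, so the subsystem is ample enough that a Bertini-type argument, combined with the general position of the $Y_{i,v}$, forces general position of $D_{0,v},\dots,D_{n,v}$. It is in this construction step that the paper's announced tools — positivity (through the Seshadri constants), blow-ups (of the $Y_{i,v}$), and direct image sheaves — interact most intricately.
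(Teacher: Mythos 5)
Your overall strategy mirrors the paper's exactly: blow up along each $Y_{i,v}$, use the Seshadri constant to make $\pi_{i,v}^*A$ minus a large multiple of $E_{i,v}$ positive, push down to obtain effective divisors on $X$ linearly equivalent to a multiple of $A$ that vanish to high order along $Y_{i,v}$, arrange general position, apply the Evertse--Ferretti inequality to those divisors, and conclude via functoriality of local heights. The devices you use to absorb slack (a fixed $\delta$ controlled by a crude bound $\lambda_{Y_{i,v},v}\leq Ch_A+O(1)$, versus the paper's $(1+\epsilon)A$ together with $\epsilon'\to 0$) are interchangeable. Two small technical slips, both fixable: the defining property of the Seshadri constant only makes $\pi_{i,v}^*A-(\gamma_{i,v}-\delta)E_{i,v}$ \emph{nef}, not ample; one upgrades to ample by adding a small multiple of the $\mathbb{Q}$-ample class $\epsilon\pi_{i,v}^*A-\epsilon'E_{i,v}$ from Lemma~\ref{amplelem}, which is precisely why the paper works with $(1+\epsilon)\pi_{i,v}^*A-(\epsilon_{i,v}+\epsilon')E_{i,v}$. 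And the passage from a section of $\O_{\tilde X_{i,v}}(N\pi_{i,v}^*A-M_{i,v}E_{i,v})$ to an effective divisor $D_{i,v}$ on $X$ with $\pi_{i,v}^*D_{i,v}\geq M_{i,v}E_{i,v}$ is not just the direct-image identification you quote; it needs $(\pi_{i,v})_*\O(-kE_{i,v})=\II_{Y_{i,v}}^k$ for $k$ large, the projection formula, and a check that the pullback of the resulting section of $\O_X(NA)$ genuinely has divisor $\dv(s)+M_{i,v}E_{i,v}$. This is what the paper's commutative-diagram argument accomplishes.

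The genuine gap is the step you label ``a Bertini-type genericity argument.'' You correctly identify it as the main obstacle, but you do not carry it out, and a naive application of Bertini fails: the linear subsystem from which $D_{i,v}$ is drawn has base locus containing $Y_{i,v}$, so a generic member passes through $Y_{i,v}$ and through the loci where the various $Y_{j,v}$ meet, and Bertini gives no information there. The correct argument must go stratum by stratum on the sets $\cap_{j\in J}Y_{j,v}$, exploiting that the relevant linear systems are base-point free \emph{off} their own $Y_{j,v}$ and that the $Y$'s are in general position to control the codimension contribution of the strata. The paper sidesteps the simultaneous choice entirely with an inductive construction: at step $j$ it replaces $Y_{j,v}$ by $F_{j,v}$, blowing up along $Y_{j,v}$ and choosing a very ample $\dv(s)$ in general position (outside $E_{j,v}$) with the pullbacks of $F_{0,v},\dots,F_{j-1,v},Y_{j+1,v},\dots,Y_{n,v}$, then checking that adding $N(\epsilon_{j,v}+\epsilon')E_{j,v}$ back preserves general position precisely because $Y_{j,v}$ itself is in general position with the inductive collection. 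That induction is the real content of the proof, and your proposal is incomplete without an equivalent.
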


We refer to Definition \ref{Sesh} for the definition of the Seshadri constants $\epsilon_{Y_{i,v}}(A)$ and Definition \ref{gen_pos_def} for the notion of general position used here.

We record as an immediate corollary the following simplified version in the case that the closed subschemes are Cartier divisors. This version stems from an earlier unpublished manuscript, and we presented it at a 2014 meeting in Banff on Vojta's Conjectures. Note that if $D$ is an effective Cartier divisor (which we also view as a closed subscheme), then $\tilde{X}=X$ in Definition \ref{Sesh} and the Seshadri constants simplify to
\begin{align*}
\epsilon_D(A)=\sup\{\gamma\in {\mathbb{Q}}^{\geq 0}\mid A-\gamma D\text{ is $\mathbb{Q}$-nef}\}.
\end{align*}

\begin{corollary}
\label{mthm_codim_1}
Let $X$ be a projective variety of dimension $n$ defined over a number field $k$.  Let $S$ be a finite set of places of $k$.  For each $v\in S$, let $D_{0,v},\ldots, D_{n,v}$ be effective Cartier divisors on $X$, defined over $k$, in general position.  Let $A$ be an ample Cartier divisor on $X$ and let $c_{i,v}$ be rational numbers such that $A-c_{i,v}D_{i,v}$ is a nef $\mathbb{Q}$-divisor for all $i$ and for all $v\in S$.  Let $\epsilon>0$.  Then there exists a proper Zariski-closed subset $Z\subset X$ such that for all points $P\in X(k)\setminus Z$,
\begin{equation*}
\sum_{v\in S}\sum_{i=0}^n c_{i,v}\lambda_{D_{i,v},v}(P)< (n+1+\epsilon)h_A(P).
\end{equation*}
\end{corollary}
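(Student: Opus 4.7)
The plan is to derive Corollary \ref{mthm_codim_1} directly from Theorem \ref{mthm} by taking the closed subschemes to be the Cartier divisors $D_{i,v}$ themselves, and then comparing the given constants $c_{i,v}$ with the Seshadri constants $\epsilon_{D_{i,v}}(A)$. First I would check that the hypothesis of general position for $D_{0,v},\dots,D_{n,v}$ as effective Cartier divisors matches the general position hypothesis of Theorem \ref{mthm} when the subschemes are taken to be these divisors (this amounts to confirming that Definition \ref{gen_pos_def}, in the divisor case, recovers the usual notion that no $n+1$ of the divisors meet simultaneously).

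Next, using the simplified formula for $\epsilon_D(A)$ recorded immediately before the corollary, the nef hypothesis on $A-c_{i,v}D_{i,v}$ yields $c_{i,v}\le \epsilon_{D_{i,v}}(A)$ for every $i$ and $v\in S$. The goal is then to replace the Seshadri constants in the conclusion of Theorem \ref{mthm} by the smaller $c_{i,v}$. The only subtlety is that Weil functions $\lambda_{D_{i,v},v}$ associated with effective Cartier divisors are only bounded below, not necessarily non-negative, so passing from
$$\sum_{v\in S}\sum_{i=0}^n \epsilon_{D_{i,v}}(A)\,\lambda_{D_{i,v},v}(P)<(n+1+\epsilon/2)\,h_A(P)$$
to the desired inequality with $c_{i,v}$ produces an $O(1)$ error coming from the finitely many lower bounds. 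I would handle this by applying Theorem \ref{mthm} with $\epsilon/2$ in place of $\epsilon$, using the lower bound $\lambda_{D_{i,v},v}(P)\ge -C$ to estimate
$$\sum_{v\in S}\sum_{i=0}^n c_{i,v}\,\lambda_{D_{i,v},v}(P)\le \sum_{v\in S}\sum_{i=0}^n \epsilon_{D_{i,v}}(A)\,\lambda_{D_{i,v},v}(P)+C',$$
and then absorbing $C'$ into $(\epsilon/2)\,h_A(P)$ provided $h_A(P)$ is sufficiently large. The finite set of $k$-rational points of small $A$-height (finite by Northcott's theorem, since $A$ is ample) is Zariski-closed in $X$, so we can enlarge the exceptional set $Z$ produced by Theorem \ref{mthm} to include them.

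There is no real obstacle here: the corollary is essentially a bookkeeping consequence of Theorem \ref{mthm}, once one reconciles the Seshadri-constant formulation with the elementary nef condition on $A-c_{i,v}D_{i,v}$ and absorbs the boundedness defect of Weil functions. The only step requiring any care is the compatibility of the two notions of general position and the choice of slightly smaller $\epsilon$ to accommodate the constant error term.
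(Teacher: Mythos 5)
Your proposal matches the paper's intent: Corollary~\ref{mthm_codim_1} is stated as an immediate consequence of Theorem~\ref{mthm}, using the simplification $\tilde X=X$ and $\epsilon_D(A)=\sup\{\gamma\in\mathbb{Q}^{\geq 0}\mid A-\gamma D\ \text{is $\mathbb{Q}$-nef}\}$ recorded just before the corollary, together with the observation that the nef hypothesis forces $c_{i,v}\leq\epsilon_{D_{i,v}}(A)$. The paper gives no explicit proof and regards the deduction as immediate. You correctly flag the only point requiring care — Weil functions are well-defined only up to $O(1)$, so passing from coefficients $\epsilon_{D_{i,v}}(A)$ to the smaller $c_{i,v}$ could a priori introduce a bounded error where $\lambda_{D_{i,v},v}$ is negative — and your resolution (apply the theorem with $\epsilon/2$, then absorb the $O(1)$ via Northcott since $A$ is ample) is the standard and correct way to handle it. The general-position remark in your first paragraph is a non-issue: both the corollary and the theorem use the same Definition~\ref{gen_pos_def}, so there is nothing to reconcile. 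Overall your argument is correct and is essentially the paper's own.
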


We note also that the above corollary yields Theorem \ref{EFnum} as a special case as, under its hypotheses, we plainly can take $c_{i,v}=\frac{1}{d_{i,v}}$.\par

By a standard argument, Theorem \ref{mthm} yields a similar inequality for proximity functions $m_{Y,S}(P)=\sum_{v\in S} \lambda_{Y,v}(P)$ and an arbitrary number of closed subschemes in general position:

\begin{corollary}
\label{EFnum_prox}
Let $X$ be a projective variety of dimension $n$ defined over a number field $k$.  Let $S$ be a finite set of places of $k$.  Let $Y_0,\ldots, Y_q$ be closed subschemes of $X$, defined over $k$, and in general position.  Let $A$ be an ample Cartier divisor on $X$, and $\epsilon>0$.  Then there exists a proper Zariski-closed subset $Z\subset X$ such that for all points $P\in X(k)\setminus Z$,
\begin{equation*}
\sum_{i=0}^q \epsilon_{Y_{i}}(A) m_{Y_{i},S}(P)< (n+1+\epsilon)h_A(P).
\end{equation*}
\end{corollary}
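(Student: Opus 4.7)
My plan is to reduce Corollary \ref{EFnum_prox} to Theorem \ref{mthm} by a standard pigeonhole-and-truncation argument: at each place $v \in S$ I retain only the $n+1$ largest local heights among $\lambda_{Y_0,v}(P),\ldots,\lambda_{Y_q,v}(P)$, which brings one back into the range of Theorem \ref{mthm}. To organize this, for each $P \in X(k) \setminus \bigcup_i \Supp Y_i$ and each $v \in S$ I pick a permutation $\sigma_v$ of $\{0,\ldots,q\}$ with
\begin{equation*}
\lambda_{Y_{\sigma_v(0)},v}(P) \geq \lambda_{Y_{\sigma_v(1)},v}(P) \geq \cdots \geq \lambda_{Y_{\sigma_v(q)},v}(P).
\end{equation*}
Since only finitely many tuples $\sigma = (\sigma_v)_{v \in S}$ arise, $X(k)$ decomposes into finitely many pieces $X_\sigma$, and it suffices to produce a proper Zariski-closed $Z_\sigma \subset X$ for each $\sigma$ on which the desired bound holds on $X_\sigma \setminus Z_\sigma$; the final exceptional set is $\bigcup_\sigma Z_\sigma$.

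Fixing $\sigma$, the next step is to truncate the tail. Because $Y_0,\ldots,Y_q$ are in general position, any $n+1$ of them have empty common intersection; in particular this is true of $Y_{\sigma_v(0)},\ldots,Y_{\sigma_v(n)}$. The standard functoriality of local heights of closed subschemes then gives
\begin{equation*}
\lambda_{Y_{\sigma_v(n)},v}(P) \;=\; \min_{0 \leq j \leq n}\lambda_{Y_{\sigma_v(j)},v}(P) \;\leq\; C_v
\end{equation*}
for a constant $C_v$ independent of $P$, hence $\lambda_{Y_{\sigma_v(j)},v}(P) \leq C_v$ for every $j \geq n+1$. Since the finitely many Seshadri constants $\epsilon_{Y_i}(A)$ are fixed positive rationals, this yields a uniform tail bound
\begin{equation*}
\sum_{v \in S}\sum_{j=n+1}^{q}\epsilon_{Y_{\sigma_v(j)}}(A)\,\lambda_{Y_{\sigma_v(j)},v}(P) \;\leq\; M
\end{equation*}
for some constant $M$ depending only on the given data.

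Now the subfamily $Y_{\sigma_v(0)},\ldots,Y_{\sigma_v(n)}$ inherits the general position hypothesis at each $v$, so applying Theorem \ref{mthm} with $\epsilon/2$ in place of $\epsilon$ yields a proper Zariski-closed $Z_\sigma \subset X$ such that
\begin{equation*}
\sum_{v \in S}\sum_{j=0}^{n} \epsilon_{Y_{\sigma_v(j)}}(A)\,\lambda_{Y_{\sigma_v(j)},v}(P) \;<\; (n+1+\epsilon/2)\,h_A(P)
\end{equation*}
for all $P \in X(k) \setminus Z_\sigma$. Adding the tail bound gives the target inequality up to the additive constant $M$, which I absorb into $(\epsilon/2)\,h_A(P)$ by enlarging $Z_\sigma$ to include the finite (Northcott) set of $P \in X(k)$ with $h_A(P) \leq 2M/\epsilon$. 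The union over $\sigma$ produces the required $Z$.

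In short, the proof is essentially bookkeeping around Theorem \ref{mthm}. The only step with real content is the tail bound, which in turn is a formal consequence of the standard fact that $\min_i \lambda_{Y_i,v}$ is $O(1)$ whenever $\bigcap_i Y_i = \emptyset$; no new input beyond Theorem \ref{mthm} and the basic machinery of local heights is needed.
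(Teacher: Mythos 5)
Your proof is correct and spells out precisely the ``standard argument'' the paper invokes (but does not write out) to deduce this corollary from Theorem \ref{mthm}: decompose $X(k)$ according to the permutation ordering the local heights at each place $v$, observe that general position forces the $(n+1)$st largest of the $\lambda_{Y_i,v}(P)$ to be $O(1)$ (since any $n+1$ of the $Y_i$ have empty common intersection, hence $\min$ of the corresponding local heights is bounded), apply Theorem \ref{mthm} to the top $n+1$ subschemes at each place, and absorb the resulting additive constant via Northcott. This is exactly the reduction the paper has in mind; the same boundedness trick also underlies the paper's short proof of Corollary \ref{repeated_cor}. One small slip worth noting: the Seshadri constants $\epsilon_{Y_i}(A)$ are defined in Definition \ref{Sesh} as suprema over $\mathbb{Q}^{\geq 0}$ and are therefore only known to be nonnegative real numbers, not rationals --- but your argument only uses that they form a finite collection of nonnegative reals, so nothing is affected.
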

For the next corollary, we observe that according to Definition \ref{gen_pos_def}, for a closed subscheme $Y$ of codimension $r$, the elements of the list obtained by repeating $Y$ up to $r$ times are in general position.

\begin{corollary}\label{repeated_cor}
Let $X$ be a projective variety of dimension $n$ defined over a number field $k$.  Let $S$ be a finite set of places of $k$.  For each $v\in S$, let $Y_v$ be a closed subscheme of $X$, defined over $k$, of codimension $\codim Y_v$ in $X$.  Let $A$ be an ample Cartier divisor on $X$, and $\epsilon>0$. Then there exists a proper Zariski-closed subset $Z\subset X$ such that for all points $P\in X(k)\setminus Z$,
\begin{equation*}
\sum_{v\in S}(\codim Y_v)\epsilon_{Y_v}(A)\lambda_{Y_{v},v}(P)< (n+1+\epsilon)h_A(P).
\end{equation*}
\end{corollary}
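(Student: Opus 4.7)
The strategy is to reduce Corollary~\ref{repeated_cor} to Theorem~\ref{mthm} by padding the list of subschemes at each place $v \in S$ out to length $n+1$ using auxiliary hyperplane sections, whose contributions to the inequality can be discarded at the end.

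Concretely, the plan is first to replace $A$ by $mA$ for an integer $m \gg 0$ so that $mA$ is very ample. For each $v\in S$, let $r_v=\codim Y_v$, and construct effective Cartier divisors $H_{r_v,v},\ldots,H_{n,v}\in|mA|$, defined over $k$, such that the list consisting of $r_v$ copies of $Y_v$ followed by $H_{r_v,v},\ldots,H_{n,v}$ is in general position on $X$. I would do this inductively: the $r_v$ copies of $Y_v$ are in general position by the observation preceding the corollary (their common intersection is $Y_v$, of codimension $r_v$), and at each subsequent step the condition that a divisor $H\in|mA|$ cut the running intersection properly defines a nonempty Zariski-open subset of $|mA|$; since $k$ is infinite and $|mA|_k$ is a projective space over $k$, this open set contains $k$-rational divisors. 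With the $H_{i,v}$ in hand, set $Y_{i,v}=Y_v$ for $0\leq i\leq r_v-1$ and $Y_{i,v}=H_{i,v}$ for $r_v\leq i\leq n$, and apply Theorem~\ref{mthm} to this collection with $\epsilon/2$ in place of $\epsilon$. Normalizing each $\lambda_{H_{i,v},v}$ to be bounded below, and using that $\epsilon_{H_{i,v}}(A)=1/m>0$, one obtains
\[
\sum_{v\in S} r_v\,\epsilon_{Y_v}(A)\,\lambda_{Y_v,v}(P) \;\leq\; \sum_{v\in S}\sum_{i=0}^{n}\epsilon_{Y_{i,v}}(A)\,\lambda_{Y_{i,v},v}(P)+O(1) \;<\;\bigl(n+1+\tfrac{\epsilon}{2}\bigr)h_A(P)+O(1)
\]
for $P$ outside a proper Zariski-closed subset furnished by Theorem~\ref{mthm}. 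The $O(1)$ terms are absorbed into $(\epsilon/2)h_A(P)$ off a set of bounded $A$-height, which is finite by Northcott's theorem and can be included in the exceptional $Z$; since $r_v = \codim Y_v$, this is the inequality of Corollary~\ref{repeated_cor}.

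The main obstacle is the Bertini-type construction of the padding divisors $H_{i,v}$ over $k$ in general position with the prescribed copies of $Y_v$: one must verify that the paper's general-position condition (Definition~\ref{gen_pos_def}) cuts out a nonempty Zariski-open subset of $|mA|_k$ at each inductive step, and then extract $k$-rational divisors therein. Beyond this, the remainder is a direct application of Theorem~\ref{mthm} together with standard bookkeeping of Weil function normalizations and Northcott's theorem.
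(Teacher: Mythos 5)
Your proposal is correct and follows essentially the same route as the paper: pad the list with $r_v$ copies of $Y_v$ plus additional subschemes in general position, apply Theorem~\ref{mthm}, and discard the padding terms using the lower bound on local heights. The paper's proof is terser, simply saying to choose the remaining closed subschemes arbitrarily while maintaining general position; you spell out a Bertini-type construction over $k$ for that step, which is fine but more than strictly needed (and one could equally absorb the resulting $O(1)$ by normalizing the padding Weil functions to be nonnegative, avoiding the appeal to Northcott).
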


In particular, if $Y_v$ is equal to a fixed point $x$ for all $v\in S$, then Corollary \ref{repeated_cor} yields the existence of a proper Zariski-closed subset $Z\subset X$ such that
\begin{equation*}
\sum_{v\in S}\epsilon_x(A)\lambda_{x,v}(P)< \left(\frac{n+1}{n}+\epsilon\right)h_A(P)
\end{equation*}
for all $P\in X(k)\setminus Z$. This statement appears as Theorem 6.2 (alternative statement) in \cite{McK_R}.  More generally, if $\codim Y_v=r$ for all $v\in S$, the inequality we obtain in Corollary \ref{repeated_cor} takes the form
\begin{equation*}
\sum_{v\in S}\epsilon_{Y_v}(A)\lambda_{Y_{v},v}(P)< \left(\frac{n+1}{r}+\epsilon\right)h_A(P).
\end{equation*}\par

Recently, Ru and Wang \cite{RW} (see also \cite{Grievea, Grieveb} for versions over function fields) have given a different generalization of McKinnon and Roth's results \cite{McK_R} to arbitrary closed subschemes:

\begin{theorem}[Ru-Wang]\label{theoremRW}
Let $X$ be a projective variety defined over a number field $k$.  Let $S$ be a finite set of places of $k$.  Let $Y_0,\ldots, Y_q$ be closed subschemes of $X$, defined over $k$, such that at most $\ell$ of the closed subschemes meet at any point of $X$.  Let $A$ be a big Cartier divisor on $X$ and let $\epsilon>0$.  Let
\begin{align*}
\beta_{A,Y_i}=\lim_{N\to\infty} \frac{\sum_{m=1}^\infty h^0(\tilde{X}_i,N\pi_i^*A-mE_i)}{Nh^0(X,NA)}, \quad i=0,\ldots, q,
\end{align*}
where $\pi_i:\tilde{X}_i\to X$ is the blowing-up of $X$ along $Y_i$, with associated exceptional divisor $E_i$.  Then there exists a proper Zariski-closed subset $Z\subset X$ such that for all points $P\in X(k)\setminus Z$,
\begin{equation*}
\sum_{i=0}^qm_{Y_{i},S}(P)< \ell\left(\max_{0\leq i\leq q}\left\{\beta_{A,Y_i}^{-1}\right\}+\epsilon\right)h_A(P).
\end{equation*}
\end{theorem}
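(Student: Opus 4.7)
The plan is to reduce to Schmidt's subspace theorem via the filtration technique of Evertse--Ferretti and Corvaja--Zannier, using the blow-ups $\pi_i:\tilde{X}_i\to X$ to handle closed (possibly non-Cartier) subschemes, in the same spirit as the proof of Theorem \ref{mthm}.

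The first step is a combinatorial reduction. Since at most $\ell$ of the $Y_i$ meet at any point, the intersection of any $\ell+1$ of them is empty, and standard functoriality of local heights then implies $\min_{i\in J}\lambda_{Y_i,v}(P)=O(1)$ for every $(\ell+1)$-element subset $J\subset\{0,\ldots,q\}$ and every $v\in S$. Hence, at each place $v$, at most $\ell$ of the values $\lambda_{Y_i,v}(P)$ fail to be $O(1)$. After partitioning $X(k)$ into finitely many classes according to the ordered tuple $(i(v,1),\ldots,i(v,\ell))_{v\in S}$ of these large indices, the left-hand side of the asserted inequality equals $\sum_{k=1}^{\ell}\sum_{v\in S}\lambda_{Y_{i(v,k)},v}(P)+O(1)$. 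It therefore suffices, for each fixed choice function $v\mapsto i^*(v)\in\{0,\ldots,q\}$, to prove the one-choice estimate
\begin{align*}
\sum_{v\in S}\lambda_{Y_{i^*(v)},v}(P)\leq \left(\max_i\beta_{A,Y_i}^{-1}+\epsilon'\right)h_A(P),
\end{align*}
and then sum over the $\ell$ slots.

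For the one-choice estimate, for each $i$ the blow-up $\pi_i$ identifies $H^0(X,NA)\cong H^0(\tilde{X}_i,N\pi_i^*A)$ and produces a decreasing filtration with $m$-th term $V^{(i)}_{N,m}=H^0(\tilde{X}_i,N\pi_i^*A-mE_i)$; a summation-by-parts identity shows that $\beta_{A,Y_i}$ is precisely the asymptotic normalized average vanishing order along $E_i$ of a basis of $H^0(X,NA)$ adapted to this filtration. At each $v\in S$, I would choose such a basis $\{s_j^{(v)}\}$ adapted to the filtration for $i^*(v)$; functoriality of local heights gives $\lambda_{s,v}(P)\geq\ord_{E_{i^*(v)}}(s)\cdot\lambda_{Y_{i^*(v)},v}(P)+O(1)$ for any basis element $s$. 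Applying Schmidt's subspace theorem to the embedding $X\hookrightarrow\mathbb{P}(H^0(X,NA)^*)$ furnished by $NA$, combining with the weight estimate, dividing by $Nh^0(X,NA)$, and letting $N\to\infty$ yields
\begin{align*}
\sum_{v\in S}\beta_{A,Y_{i^*(v)}}\lambda_{Y_{i^*(v)},v}(P)\leq (1+\epsilon'')h_A(P),
\end{align*}
from which the one-choice estimate follows by bounding $\beta_{A,Y_{i^*(v)}}\geq\min_i\beta_{A,Y_i}$.

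The main technical obstacle is the construction of bases with the required compatibility and the quantitative control over vanishing weights, together with the verification that the asymptotic Riemann--Roch identity defining $\beta_{A,Y_i}$ produces the claimed limit constant; this requires careful handling of $o(N\cdot h^0(X,NA))$ error terms when combining Schmidt's inequality with the filtration weight. Additional care is required because $A$ is only assumed big rather than ample, so one must verify convergence of the relevant Hilbert-type limits and the existence of enough global sections to feed into Schmidt's theorem, as in Autissier's and Ru's related work on Vojta's conjecture.
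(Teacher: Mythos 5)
Note first that the paper states this result only as a citation of Ru--Wang \cite{RW} and does not prove it, so there is no internal argument in this paper to compare against. Your sketch is nevertheless recognizably Ru and Wang's actual strategy: pass to the filtration $H^0(\tilde X_i,N\pi_i^*A-mE_i)$, read off $\beta_{A,Y_i}$ as a normalized total weight of an adapted basis via Abel summation, and apply Schmidt's subspace theorem to the $l=h^0(X,NA)$ basis hyperplanes in $\mathbb{P}^{l-1}$, where the crucial point is that $(l+\epsilon)/l\to 1$ as $N\to\infty$, so only the factor $1$ (not $\dim X+1$) survives on the right after dividing by $Nl$. The combinatorial reduction via $\cap_{i\in J}Y_i=\emptyset$ for $|J|=\ell+1$, the boundedness of $\lambda_{\emptyset,v}$, and the identity $\lambda_{Y\cap Z,v}=\min\{\lambda_{Y,v},\lambda_{Z,v}\}+O(1)$ are also the right ingredients. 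One point to phrase more carefully: the constant $C$ bounding $\min_{i\in J}\lambda_{Y_i,v}(P)$ must be taken uniform over the finitely many $(\ell+1)$-subsets $J$ before you can conclude that at most $\ell$ indices have $\lambda_{Y_i,v}(P)>C$; and the partition of $X(k)$ by large-index tuples is not Zariski-closed, so one should prove the one-choice estimate for every choice function $v\mapsto i^*(v)$ outside a single proper closed set (the union of exceptional sets) and then, for each $P$, specialize to the $\ell$ choice functions determined by $P$. The big-versus-ample issue you flag is a genuine technicality, handled in \cite{RW} by working on the locus where $|NA|$ is birational onto its image and absorbing the augmented base locus into $Z$.

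By contrast, the present paper's own Theorem~\ref{mthm} is proved by a genuinely different route that avoids $\beta$-invariants altogether: an inductive construction on the blow-ups produces effective divisors $F_{i,v}\sim N(1+\epsilon)A$ in general position whose pullbacks dominate $N(\epsilon_{i,v}+\epsilon')E_{i,v}$, and one then invokes Evertse--Ferretti's theorem directly in dimension $n$. That argument exchanges the asymptotic filtration estimate for a Seshadri-constant hypothesis and a concrete section-construction, which is part of what makes Theorem~\ref{mthm} and Theorem~\ref{theoremRW} genuinely non-nested as discussed in the introduction.
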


When the closed subschemes $Y_i=y_i$ are distinct points in $X$ and $\dim X=n$, then one may take $\ell=1$ in Theorem \ref{theoremRW}, and McKinnon and Roth have shown that
\begin{align*}
\beta_{A,y_i}\geq \frac{n}{n+1}\epsilon_{y_i}(A),
\end{align*}
yielding a connection to Seshadri constants.  More generally, combining the method of proof of our main theorem with work of Autissier \cite{autissier}, we show in Theorem~\ref{shineq_thm} that if $X$ is non-singular and  $Y$ has codimension $r$ in $X$, then
\begin{align}
\label{Seshineq}
\beta_{A,Y}\geq \frac{r}{n+1}\epsilon_{Y}(A).
\end{align}

This yields an alternative proof of Corollary \ref{repeated_cor} through Ru-Wang's theorem in the non-singular case (more precisely, one must combine \eqref{Seshineq} with the proof of Ru-Wang's theorem in \cite{RW} to obtain the exact statement of Corollary \ref{repeated_cor} in the non-singular case).  The main theorem (Theorem \ref{mthm}) and its other corollaries (Corollary \ref{mthm_codim_1} and Corollary \ref{EFnum_prox}), however, do not follow from a na\"ive direct application of Ru-Wang's theorem, even in the non-singular case.  For instance, even in the simple case when the $Y_i$ are hypersurfaces in general position on $\mathbb{P}^n$, due to the factor $\ell$ on the right-hand side of the inequality, Ru-Wang's theorem doesn't recover Evertse-Ferretti's Theorem \ref{EF} (see \cite[Th.~1.7]{RW} and the discussion of Ru-Vojta's result \cite{RuVojta} following it).

The proof of Theorem \ref{mthm} relies on an application of Evertse and Ferretti's inequality \eqref{EFeq} to suitably constructed linearly equivalent divisors associated to the closed subschemes $Y_{i,v}$ and the ample divisor $A$ of Theorem \ref{mthm}.  In contrast to the proofs given in \cite{McK_R} and \cite{RW}, Seshadri constants appear directly in our arguments, and we do not make any use of inequalities of the form \eqref{Seshineq}.  In part due to this, our proofs are substantially simpler, at least under the assumption of Theorem \ref{EF}.

Using the well-known correspondence between statements in Diophantine approximation and Nevanlinna theory \cite{Vojta_LNM}, the proof of Theorem \ref{mthm} can be adapted to prove the following generalization of the Second Main Theorem in Nevanlinna theory:
\begin{theorem}
\label{mthmNev}
Let $X$ be a complex projective variety of dimension $n$. Let $Y_0,\ldots, Y_q$ be closed subschemes of $X$, $f:\mathbb{C}\to X$ a holomorphic map with Zariski dense image, $A$ an ample Cartier divisor on $X$, and $\epsilon>0$.  Then 
\begin{equation*}
\int_{0}^{2\pi}\max_J \sum_{j\in J} \epsilon_{Y_j}(A) \lambda_{Y_j}(f(re^{i\theta}))\frac{d\theta}{2\pi}\leq_{\operatorname{exc}} (n+1+\epsilon)T_{f,A}(r),
\end{equation*}
where the maximum is taken over all subsets $J$ of $\{0,\dots, q\}$ such that the closed subschemes $Y_j$,  $j\in J$, are in general position, and the notation $\leq_{\operatorname{exc}}$ means that the inequality holds for all $r\in (0,\infty)$ outside of a set of finite Lebesgue measure.
\end{theorem}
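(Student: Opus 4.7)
The plan is to mirror the proof of Theorem \ref{mthm} step by step using Vojta's dictionary between Diophantine approximation and value distribution theory. On the Nevanlinna side, the role of Evertse--Ferretti's Theorem \ref{EF} is played by Ru's Second Main Theorem for effective divisors linearly equivalent to a multiple of a fixed ample divisor (the hypersurface-type SMT). The algebro-geometric construction of auxiliary divisors from the $Y_i$ via Seshadri constants is completely independent of whether we work with rational points or entire holomorphic curves, so it can be imported verbatim from the proof of Theorem \ref{mthm}.

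Concretely, I would first fix an integer $N$ sufficiently large and divisible and, for each closed subscheme $Y_i$, use the definition of $\epsilon_{Y_i}(A)$ (Definition \ref{Sesh}) to produce on the blow-up $\pi_i:\tilde{X}_i\to X$ enough global sections of $N\pi_i^*A - \lceil N\epsilon_{Y_i}(A)\rceil E_i$ (up to a small loss absorbed in $\epsilon$) to cut out, after pushing forward, a finite collection of effective Cartier divisors $D_{i,1},\ldots, D_{i,s_i}$ on $X$, all linearly equivalent to $NA$, such that one has the pointwise comparison of Weil functions
\begin{equation*}
\epsilon_{Y_i}(A)\,\lambda_{Y_i}(x) \le \frac{1}{N}\max_{1\le j\le s_i}\lambda_{D_{i,j}}(x) + O(1),
\end{equation*}
and such that the total collection $\{D_{i,j}\}_{i,j}$ inherits the general position property of the $Y_i$ when only $n+1$ indices $i$ in general position are selected.

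Next, for each $r$ and $\theta$ I would choose the subset $J(\theta)\subset\{0,\ldots,q\}$ of size $n+1$ (with the $Y_j$, $j\in J(\theta)$, in general position) maximizing the integrand, and for each $j\in J(\theta)$ an index $\iota(j,\theta)$ attaining $\max_{k}\lambda_{D_{j,k}}(f(re^{i\theta}))$. Since there are only finitely many such choices of $(J,\iota)$, the circle decomposes into finitely many measurable subsets indexed by these choices, and on each piece the integrand is bounded by $\frac{1}{N}\sum_{j\in J}\lambda_{D_{j,\iota(j)}}(f(re^{i\theta})) + O(1)$ with a \emph{fixed} family $D_{j_0,\iota_0},\ldots, D_{j_n,\iota_n}$ of $n+1$ divisors in general position, each linearly equivalent to $NA$. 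Applying Ru's SMT to each such fixed family (using Zariski density of the image of $f$ to control the exceptional set) and summing over the finitely many pieces yields the required bound $(n+1+\epsilon)T_{f,A}(r)$ after letting $N$ be large relative to $\epsilon$.

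The main obstacle I anticipate is purely technical: verifying measurability of the decomposition of $[0,2\pi]$ and accounting uniformly for the $O(1)$ errors in the Weil function comparisons across the finitely many regions, along with the usual ``$\leq_{\operatorname{exc}}$'' bookkeeping when passing from each fixed application of Ru's SMT to the integrated inequality. No new conceptual ingredient is required beyond recognizing that the ``$\max_J$'' in the statement plays exactly the role filled by the place-dependent choice of subschemes $Y_{i,v}$ on the Diophantine side.
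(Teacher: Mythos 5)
Your high-level strategy — transpose the proof of Theorem~\ref{mthm} via Vojta's dictionary, replacing Evertse--Ferretti's Theorem~\ref{EF} with the appropriate Second Main Theorem from Ru \cite{ru_09} — is exactly what the paper does; the paper itself offers only this sketch and omits the details. However, as filled in, your proposal contains a genuine error in the final step. You propose to decompose the circle into finitely many pieces indexed by the choices $(J,\iota)$, apply Ru's SMT separately on each piece to the corresponding fixed family of $n+1$ divisors, and then sum over the pieces. This does not give the stated bound: Ru's inequality controls the \emph{full} integral $\int_0^{2\pi}$, not the partial integral over a single piece $S_k$, and the best one can say is $\int_{S_k}\phi_k \le \int_0^{2\pi}\phi_k + O(1)$. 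Summing the SMT bound over the $m$ pieces then yields $m(n+1+\epsilon)T_{f,A}(r)$, which is off by a factor of $m$. The correct route is to apply the precise Nevanlinna analogue of Theorem~\ref{EF} (the reduction step, Eq.~(3.2) and (3.20)--(3.22) of \cite{ru_09}) \emph{once}, to the entire finite collection of constructed auxiliary divisors, using its form where the integrand carries a maximum over subsets that are in general position; the original integrand is then pointwise dominated by that integrand, and a single application suffices.

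A secondary issue is the construction of the auxiliary divisors. You build $D_{i,1},\ldots,D_{i,s_i}$ separately for each $Y_i$, with a pointwise max over $j$, and assert general position of $D_{i_0,j_0},\ldots,D_{i_n,j_n}$ whenever the $Y_{i_t}$ are. But in the proof of Theorem~\ref{mthm} the divisors $F_{0,v},\ldots,F_{n,v}$ are produced \emph{jointly} by an inductive argument in which $F_{j,v}$ is chosen to be in general position with the previously chosen $F_{0,v},\ldots,F_{j-1,v}$ together with the remaining $Y_{j+1,v},\ldots,Y_{n,v}$. In the Nevanlinna setting this joint construction must be carried out once for each admissible subset $J\subset\{0,\ldots,q\}$ (the $Y_j$, $j\in J$, in general position), giving families $\{F_j^{(J)}\}_{j\in J}$; general position is not automatic for a per-$i$ construction without spelling out an analogous inductive avoidance step. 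Also, the max over $j$ in your Weil-function comparison is superfluous: a single $F_j^{(J)}$ with $\pi_j^* F_j^{(J)} \geq N(\epsilon_j+\epsilon') E_j$ already yields $N(\epsilon_j+\epsilon')\lambda_{Y_j} \leq \lambda_{F_j^{(J)}} + O(1)$, exactly as in the arithmetic case.
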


We refer to \cite{yamanoi_04} for the basic notation and development of Nevanlinna theory with respect to closed subschemes (i.e., an analogue of Silverman's theory \cite{silverman_87} in Nevanlinna theory).  Along with this theory, the proof of Theorem \ref{mthmNev} follows the proof of Theorem \ref{mthm}, but one must substitute the use of Evertse and Ferretti's result (Theorem \ref{EF}) with an appropriate application of the analogous result in Nevanlinna theory due to Ru \cite{ru_09}.  Ru's ``main result" in \cite{ru_09} isn't stated in a manner analogous to Theorem \ref{EF}; however, the proof in \cite{ru_09} proceeds via a reduction to a precise analogue of Theorem \ref{EF} (see Eq.~(3.2) and Eq.~(3.20)--(3.22) in \cite{ru_09}).  Analogues in Nevanlinna theory of Corollary \ref{mthm_codim_1}, Corollary \ref{EFnum_prox}, and Corollary \ref{repeated_cor} follow immediately.  As the proofs are reasonably straightforward (with the above substitutions), we omit the details.  In another direction, we will explore the implications of Theorem \ref{mthm} for the degeneracy of integral points in a subsequent paper.\par

The following Section \ref{background_material} contains background material from algebraic geometry and Diophantine approximation. The proof of the main theorem, and its application to Corollary \ref{repeated_cor}, will be given in Section \ref{mthm_proof}. In the final Section \ref{beta_ineq}, we establish inequality \eqref{Seshineq}, clarifying the relationship with the work of Ru-Wang.

\section{Background material}\label{background_material}
We begin by recalling some background material related to blowing-ups, based on \cite[Section II.7]{Hartshorne}. Note that throughout the paper, we will alternatingly use the language of invertible sheaves, line bundles, and divisors in order to properly align with statements and expressions from the literature and also to streamline our own notation. Let $X$ be a projective variety and $Y$ a closed subscheme of $X$, corresponding to a coherent sheaf of ideals $\II_Y$. Consider the sheaf of graded algebras $\SS=\bigoplus_{d\geq 0} \II_Y^d$, where $\II_Y^d$ is the $d$-th power of $\II_Y$, with the convention that $\II_Y^0=\O_X$. Then $\tilde X := \Proj \SS$ is called the {\it blowing-up of $X$ with respect to $\II_Y$}, or, alternatively, {\it the blowing-up of $X$ along $Y$}.\par
It should be noted that the corresponding morphism $\tilde X \to X$ in general does not necessarily contract a divisor on $\tilde X$. We will use the following well-known proposition.

\begin{proposition}[{\cite[Proposition II.7.13(a)]{Hartshorne}}]
Let $X$ be a projective variety, $\II_Y$ a coherent sheaf of ideals corresponding to a closed subscheme $Y$, and let $\pi:\tilde X\to X$ be the blowing-up with respect to $\II_Y$. Then the inverse image ideal sheaf $\tilde \II_Y = \pi^{-1}\II_Y \cdot \O_{\tilde X}$ is an invertible sheaf on $\tilde X$.
\end{proposition}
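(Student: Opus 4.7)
The plan is to reduce to the affine case, exhibit an explicit invertible sheaf (the Serre twist $\mathcal{O}_{\tilde X}(1)$ coming from the Proj construction), and identify it with $\tilde \II_Y$. Since invertibility of a sheaf is a local property on $\tilde X$, and since the formation of blowing-ups commutes with restriction to open subsets of $X$, I would first cover $X$ by affine opens $U = \operatorname{Spec} A$ on which $\II_Y$ corresponds to an ideal $I \subset A$ generated by finitely many elements $f_0,\dots,f_m$. Then $\pi^{-1}(U) = \operatorname{Proj} S$, where $S = \bigoplus_{d\geq 0} I^d$ is the Rees algebra, and by construction $S$ is generated as an $A$-algebra in degree $1$ by $f_0,\dots,f_m$ regarded as degree-one elements of $S_1 = I$.

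The key step is to show that under these hypotheses $\mathcal{O}_{\tilde X}(1)$ is an invertible sheaf and that it coincides with $\pi^{-1}\II_Y \cdot \mathcal{O}_{\tilde X}$. The first assertion is the standard fact that when a graded ring is generated in degree $1$ over its degree-$0$ part, the twisting sheaf $\mathcal{O}(1)$ on its Proj is locally free of rank one; I would verify this by noting that on the affine chart $D_+(f_i)$ the sheaf $\mathcal{O}(1)$ is the module $S(1)_{(f_i)}$ over $S_{(f_i)}$, which is a free rank-one module generated by $f_i/1$. For the second assertion, I would compute the ideal $\pi^{-1}\II_Y\cdot \mathcal{O}_{\tilde X}$ on the same chart: it is generated by the images of $f_0,\dots,f_m$, and since $f_j = (f_j/f_i)\cdot f_i$ with $f_j/f_i \in S_{(f_i)}$, the ideal on $D_+(f_i)$ is principal, generated by $f_i$, and isomorphic to $\mathcal{O}(1)|_{D_+(f_i)}$ via multiplication by the unit $f_i$.

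The only slightly delicate point is to check that this principal generator $f_i$ is a non-zero-divisor on $D_+(f_i)$, so that multiplication by $f_i$ really gives an isomorphism $\mathcal{O}_{D_+(f_i)} \xrightarrow{\sim} \tilde \II_Y|_{D_+(f_i)}$ and not merely a surjection; this follows because $f_i$ is invertible in the localization $S_{(f_i)}[\text{irrelevant}]$, or more concretely because the canonical map $S \to S[f_i^{-1}]_0$ sends $f_i$ to a unit in $S_{(f_i)}$ up to the shift in grading. Finally, I would glue the local identifications $\tilde \II_Y|_{D_+(f_i)} \cong \mathcal{O}(1)|_{D_+(f_i)}$ across overlaps, which is automatic since both sides are defined globally on $\tilde X$ and agree on an affine open cover.

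The hardest part, conceptually, is keeping the Proj formalism straight: one must be careful that $\mathcal{O}(1)$ really is invertible (which requires degree-one generation, a feature special to Rees algebras), and that the specific generators $f_i$ of $I$ give both the affine cover and the local trivializations of the ideal sheaf simultaneously. Once this bookkeeping is set up, the argument is essentially a direct computation on each chart $D_+(f_i)$.
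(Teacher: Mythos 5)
The paper does not prove this statement; it is quoted verbatim from Hartshorne \cite[Proposition II.7.13(a)]{Hartshorne}, and the paper simply cites it. Your argument is essentially Hartshorne's: reduce to the affine case, use degree-one generation of the Rees algebra $S = \bigoplus I^d$ to see that $\O(1)$ is invertible, and compute on each chart $D_+(f_i)$ that the inverse image ideal sheaf is principal, generated by (the degree-zero incarnation of) $f_i$, and coincides with $\O(1)|_{D_+(f_i)}$. The one place where your write-up is a bit vague is the non-zero-divisor check: the phrase that $f_i$ ``is invertible in the localization $S_{(f_i)}[\text{irrelevant}]$'' is not quite the right justification. The cleaner observation is that if $s = a/(f_i^{(1)})^d \in S_{(f_i)}$ with $a \in I^d$ satisfies $f_i \cdot s = 0$, this means $(f_i^{(1)})^N \cdot f_i \cdot a = 0$ in $S$ for some $N$, i.e.\ $f_i^{N+1}a = 0$ in $A$; but this already forces $(f_i^{(1)})^{N+1} a = 0$ in $S$, hence $s = 0$ in $S_{(f_i)}$. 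So multiplication by $f_i$ is injective on $S_{(f_i)}$, and your local isomorphisms $\O_{D_+(f_i)} \xrightarrow{\sim} \tilde\II_Y|_{D_+(f_i)}$ are genuine. With that point tightened up, the proof is correct and matches the standard one.
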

In fact, according to the proof of \cite[Proposition II.7.13(a)]{Hartshorne}, this invertible sheaf is $\O_{\tilde X}(1)$, as defined in \cite[p.~160, Construction]{Hartshorne}. Throughout the paper, we let $E$ denote an effective Cartier divisor in $\tilde X$ whose associated invertible sheaf is the dual of $\pi^{-1}\II_Y \cdot \O_{\tilde X}$. As we already remarked, the divisor $E$ might not be exceptional in the sense that it may not be contracted, but that turns out to be irrelevant to our argument. Moreover, \cite[Ex.~II.7.14(b)]{Hartshorne} gives us

\begin{lemma}
\label{amplelem}
Let $Y$ be a closed subscheme on a projective variety $X$.  Let $\pi:\tilde{X}\to X$ be the blowing-up along $Y$ and let $A$ be an ample Cartier divisor on $X$.  Then for all sufficiently small positive $\epsilon\in \mathbb{Q}$, the $\mathbb{Q}$-divisor $\pi^*A-\epsilon E$ is $\mathbb{Q}$-ample on $\tilde{X}$, where $E$ is an effective Cartier divisor on $\tilde X$ whose associated invertible sheaf is the dual of $\pi^{-1}\II_Y \cdot \O_{\tilde X}$.
\end{lemma}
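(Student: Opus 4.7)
The plan is to exploit the fact that the divisor $-E$ is relatively ample with respect to the blowing-up morphism $\pi$, and then combine this with the ampleness of $A$ on the base $X$ via a standard criterion for ampleness of line bundles on projective morphisms.

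First, I would identify the invertible sheaf $\O_{\tilde X}(-E)$ with the tautological sheaf $\O_{\tilde X}(1)$ coming from the $\Proj$ construction of $\SS=\bigoplus_{d\geq 0}\II_Y^d$. Because $\SS$ is generated in degree one over $\O_X$ (indeed $\SS_d = \II_Y^d$ is, by definition, the $d$-fold product of $\SS_1 = \II_Y$) and $\SS_1$ is coherent, $\O_{\tilde X}(1)$ is very ample relative to $\pi$, i.e., $-E$ is a $\pi$-ample Cartier divisor on $\tilde X$.

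Next, I would invoke the standard fact (see, e.g., \cite[Proposition II.7.10]{Hartshorne} together with its proof, or Grothendieck's EGA II, 4.6.13) that if $\pi:\tilde X\to X$ is a projective morphism, $L$ is a $\pi$-ample invertible sheaf on $\tilde X$, and $M$ is an ample invertible sheaf on $X$, then $L\otimes \pi^*M^{\otimes N}$ is ample on $\tilde X$ for all sufficiently large integers $N$. Applying this with $L = \O_{\tilde X}(-E)$ and $M = \O_X(A)$, we conclude that $N\pi^*A - E$ is an ample Cartier divisor on $\tilde X$ for all $N\gg 0$. Dividing by $N$ and setting $\epsilon = 1/N$ shows that the $\mathbb{Q}$-divisor $\pi^*A - \epsilon E$ is $\mathbb{Q}$-ample for every such rational $\epsilon$, and hence for every sufficiently small positive rational $\epsilon$.

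The only non-routine step is verifying the $\pi$-ampleness of $\O_{\tilde X}(-E)$; once that is in hand, the rest is immediate from the cited criterion. Since the whole lemma is essentially \cite[Ex.~II.7.14(b)]{Hartshorne}, I would keep the exposition brief and refer the reader to Hartshorne for the detailed verifications.
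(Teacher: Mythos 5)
Your proposal is correct and takes essentially the same approach as the paper, which simply cites \cite[Ex.~II.7.14(b)]{Hartshorne} as the source of this lemma; you have merely unpacked the content of that exercise, namely that $-E$ is $\pi$-relatively ample (being $\O_{\tilde X}(1)$ for a sheaf of graded algebras generated in degree one) and that adding a sufficiently large multiple of the pullback of an ample divisor on the base makes it globally ample. One tiny point worth spelling out: the argument directly produces $\mathbb{Q}$-ampleness of $\pi^*A-\tfrac{1}{N}E$ for $N\gg 0$, and to pass to \emph{all} sufficiently small rational $\epsilon$ one writes $\pi^*A-\epsilon E=(1-N\epsilon)\pi^*A+N\epsilon\bigl(\pi^*A-\tfrac{1}{N}E\bigr)$ for $0<\epsilon\le 1/N$, a nonnegative combination of a nef and an ample $\mathbb{Q}$-divisor, hence ample.
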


We now define the appropriate notion of a {\it Seshadri constant} in this context. When $X$ is non-singular, this definition was made in Definition 1.1 and Remark 1.3 of \cite{CEL} by Cutkosky-Ein-Lazarsfeld (motivated in turn by earlier work of Paoletti \cite{Paoletti_JDG}). We expand their definition in a natural way to the singular case, which, while enabling us to obtain the desired Diophantine approximation results in full generality, may also be of independent interest.

\begin{definition}\label{Sesh}
Let $Y$ be a closed subscheme of a projective variety $X$ and let $\pi:\tilde{X}\to X$ be the blowing-up of $X$ along $Y$. Let $A$ be a nef Cartier divisor on $X$. We define the {\it Seshadri constant} $\epsilon_Y(A)$ of $Y$ with respect to $A$ to be the real number
\begin{align*}
\epsilon_Y(A)=\sup\{\gamma\in {\mathbb{Q}}^{\geq 0}\mid \pi^*A-\gamma E\text{ is $\mathbb{Q}$-nef}\},
\end{align*}
where $E$ is an effective Cartier divisor on $\tilde X$ whose associated invertible sheaf is the dual of $\pi^{-1}\II_Y \cdot \O_{\tilde X}$.
\end{definition}
For closed subschemes we work with the following notion of being in {\it general position}. As was remarked in the introduction, for a closed subscheme $Y$ of codimension $r$, the elements of the list obtained by repeating $Y$ up to $r$ times are, somewhat counterintuitively, in general position according to this definition.

\begin{definition}\label{gen_pos_def}
If $X$ is a projective variety of dimension $n$, we say that closed subschemes $Y_1,\ldots, Y_q$ of $X$ are in {\it general position} if for every subset $I\subset\{1,\ldots, q\}$ with $|I|\leq n+1$ we have $\codim \cap_{i\in I} Y_i\geq |I|$, where we use the convention that $\dim \emptyset=-1$. If $V$ is a subset of $X$, we say that closed subschemes $Y_1,\ldots, Y_q$ of $X$ are in {\it general position outside of $V$} if for every subset $I\subset\{1,\ldots, q\}$ with $|I|\leq n+1$ we have $\codim \left(\left(\cap_{i\in I} Y_i\right)\backslash V\right)\geq |I|$.\par
\end{definition}

\begin{remark}
When $Y_i=D_i$, $i=1,\ldots, q$, is an ample effective divisor for each $i$ (as in Theorem \ref{EF}), Definition \ref{gen_pos_def} is easily seen to be equivalent to other familiar notions of general position.  If $q>n$, the ample divisors $D_1,\ldots, D_q$ are in general position if and only if the intersection of any $n+1$ distinct divisors $D_i$ is empty.  In general, the ample divisors $D_1,\ldots, D_q$ are in general position if and only if for any subset $I\subset\{1,\ldots, q\}$ with $|I|\leq n+1$, we have an equality $\codim \cap_{i\in I} D_i=|I|$. 
\end{remark}

We end this section by recalling some of the basic properties of height functions.  Classically, one associates a height (or local height) to a (Cartier) divisor on a projective variety.  More generally, there is a theory of heights (and local heights) associated to arbitrary closed subschemes of a projective variety.  We give here a quick summary of the relevant properties of such heights and refer the reader to Silverman's paper \cite{silverman_87} for the general theory and details.

Let $Y$ be a closed subscheme of a projective variety $X$, both defined over a number field $k$.  For any place $v$ of $k$, one can associate a local height function (or Weil function) $\lambda_{Y,v}:X(k)\setminus Y\to \mathbb{R}$, well-defined up to $O(1)$, which gives a measure of the $v$-adic distance of a point to $Y$, being large when the point is close to $Y$. Moreover, one can associate a global height function $h_Y$, well-defined up to $O(1)$, which is a sum of appropriate local height functions.  If $Y=D$ is an effective (Cartier) divisor (which we will frequently identify with the associated closed subscheme), these height functions agree with the usual height functions associated to divisors.  Local height functions satisfy the following properties: if $Y$ and $Z$ are two closed subschemes of $X$, defined over $k$, and $v$ is a place of $k$, then up to $O(1)$,
\begin{align*}
\lambda_{Y\cap Z,v}&=\min\{ \lambda_{Y,v},\lambda_{Z,v}\},\\
\lambda_{Y+Z,v}&=\lambda_{Y,v}+\lambda_{Z,v},\\
\lambda_{Y,v}&\leq \lambda_{Z,v}, \text{ if }Y\subset Z.
\end{align*}
In particular, $\lambda_{Y,v}$ is bounded from below for all $P\in X(k)\setminus Y$.
  If $\phi:W\to X$ is a morphism of projective varieties with $\phi(W) \not \subset Y$, then up to $O(1)$,
\begin{equation*}
\lambda_{Y,v}(\phi(P))=\lambda_{\phi^*Y,v}(P), \quad \forall P\in W(k)\setminus \phi^*Y.
\end{equation*}
Here, $Y\cap Z$, $Y+Z$, $Y\subset Z$, and $\phi^*Y$ are defined in terms of the associated ideal sheaves (see \cite{silverman_87}).  In particular, we emphasize that if $Y$ corresponds to the ideal sheaf $\II_Y$, then $\phi^*Y$ is the closed subscheme corresponding to the inverse image ideal sheaf $\phi^{-1}\II_Y\cdot \mathcal{O}_W$.  Global height functions satisfy similar properties (except the first property above, which becomes $h_{Y\cap Z}\leq\min\{ h_{Y},h_{Z}\}+O(1)$).

\section{Proof of Theorem \ref{mthm} and Corollary \ref{repeated_cor}}\label{mthm_proof}

We now give the proof of the main theorem.

\begin{proof}[Proof of Theorem \ref{mthm}]
Let $\II_{i,v}$ be the coherent sheaf of ideals associated to $Y_{i,v}$, $\pi_{i,v}:\tilde{X}_{i,v}\to X$ be the blowing-up of $X$ along $Y_{i,v}$, and $E_{i,v}$ the associated divisor on $\tilde{X}_{i,v}$.  Let $\epsilon>0$ be a rational number, and for each $i$ and $v$ let $\epsilon_{i,v}\leq \epsilon_{Y_{i,v}}(A)$ be a positive rational number.  It follows from the definitions that $\pi_{i,v}^*A-\epsilon_{i,v}E_{i,v}$ is a nef $\mathbb{Q}$-divisor.  By Lemma \ref{amplelem}, for any sufficiently small positive rational number $\epsilon'$, depending on $\epsilon$, we have that $\epsilon\pi_{i,v}^*A-\epsilon'E_{i,v}$ is ${\mathbb{Q}}$-ample for all $i$ and $v$.  Therefore, 
\begin{align*}
(\pi_{i,v}^*A-\epsilon_{i,v}E_{i,v})+(\epsilon \pi_{i,v}^*A-\epsilon'E_{i,v})=(1+\epsilon)\pi_{i,v}^*A-(\epsilon_{i,v}+\epsilon')E_{i,v}
\end{align*}
is an ample $\mathbb{Q}$-divisor for all $i$ and $v$.  It follows from \cite[Exercise II.5.9(b)]{Hartshorne} that $\pi_{i,v}{}_*\O_{\tilde X_{i,v}}(-kE_{i,v})=\II_{i,v}^k$ for all sufficiently large integers $k$.  Let $N$ be a positive integer such that for all $i$ and for all $v\in S$, 
\begin{align}
\label{directimeq}
\pi_{i,v}{}_*\O_{\tilde X_{i,v}}(-N(\epsilon_{i,v}+\epsilon')E_{i,v})=\II_{i,v}^{N(\epsilon_{i,v}+\epsilon')}
\end{align}
and $N((1+\epsilon)\pi_{i,v}^*A-(\epsilon_{i,v}+\epsilon')E_{i,v})$ is a very ample divisor on $\tilde{X}_{i,v}$.  Fix $v\in S$.  We now construct divisors $F_{0,v},\ldots, F_{n,v}$ on $X$ such that
\begin{enumerate}
\item $N(1+\epsilon)A\sim F_{i,v}$, $i=0,\ldots, n$.\label{c1}
\item $\pi_{i,v}^*F_{i,v}\geq N(\epsilon_{i,v}+\epsilon')E_{i,v}$, $i=0,\ldots, n$.\label{c2}
\item The divisors $F_{0,v},\ldots, F_{n,v}$ are in general position on $X$.
\end{enumerate}

We define $F_{0,v},\ldots, F_{n,v}$ inductively as follows.  For some $j\in \{0,\ldots, n\}$, assume that $F_{0,v},\ldots, F_{j-1,v}$ have been defined so that \eqref{c1} and \eqref{c2} hold for $0\leq i\leq j-1$, and $F_{0,v},\ldots, F_{j-1,v},Y_{j,v},\ldots, Y_{n,v}$ are in general position on $X$ (for $j=0$ this reduces to the hypothesis that $Y_{0,v},\ldots, Y_{n,v}$ are in general position).  

Let $\tilde{F}^{(j)}_{i,v}=\pi_{j,v}^*F_{i,v}$, $i=0,\ldots, j-1$, and $\tilde{Y}^{(j)}_{i,v}=\pi_{j,v}^*Y_{i,v}$, $i=0,\ldots, n$. 
Since, in particular,  $F_{0,v},\ldots, F_{j-1,v},Y_{j+1,v},\ldots, Y_{n,v}$ are in general position on $X$, and $\pi_{j,v}$ is an isomorphism outside of $E_{j,v}$, the closed subschemes $\tilde{F}^{(j)}_{0,v},\ldots, \tilde{F}^{(j)}_{j-1,v},\tilde{Y}^{(j)}_{j+1,v},\ldots, \tilde{Y}^{(j)}_{n,v}$ are in general position on $\tilde{X}_{j,v}$ outside of $E_{j,v}$.  As $N((1+\epsilon)\pi_{j,v}^*A-(\epsilon_{j,v}+\epsilon')E_{j,v})$ is very ample, we can find a non-zero section $s\in H^0(\tilde X_{j,v}, \O_{\tilde X_{j,v}}(N((1+\epsilon)\pi_{j,v}^*A-(\epsilon_{j,v}+\epsilon')E_{j,v})))$ such that $\tilde{F}^{(j)}_{0,v},\ldots, \tilde{F}^{(j)}_{j-1,v},\dv(s), \tilde{Y}^{(j)}_{j+1,v},\ldots, \tilde{Y}^{(j)}_{n,v}$ are in general position on $\tilde{X}_{j,v}$ outside of $E_{j,v}$.  Let $\tilde{F}^{(j)}_{j,v}=\dv(s)+N(\epsilon_{j,v}+\epsilon')E_{j,v}$.  Then $\tilde{F}^{(j)}_{0,v},\ldots, \tilde{F}^{(j)}_{j-1,v}, \tilde{F}^{(j)}_{j,v}, \tilde{Y}^{(j)}_{j+1,v},\ldots, \tilde{Y}^{(j)}_{n,v}$ are in general position on $\tilde{X}_{j,v}$ outside of $E_{j,v}$.

We now claim that there is an effective divisor $F_{j,v}\sim N(1+\epsilon)A$ on $X$ such that $\pi_{j,v}^*F_{j,v}=\tilde{F}^{(j)}_{j,v}$.  For ease of notation, we temporarily set $\tilde{X}=\tilde X_{j,v}$, $\pi=\pi_{j,v}$, $\LL=\O_X(N(1+\epsilon)A)$, $\II=\II_{j,v}^{N(\epsilon_{j,v}+\epsilon')}$, and $\tilde{\II}=\O_{\tilde X_{j,v}}(-N(\epsilon_{j,v}+\epsilon')E_{j,v}))$.  Then $\tilde{\II}=\pi^{-1}\II\cdot \O_{\tilde X}$ and, by our choice of $N$, $\pi_*\tilde{\II}=\II$.

We have a map of sheaves on $\tilde{X}$,
\begin{align*}
\pi^*\II\to \tilde{\II}\to \O_{\tilde{X}}=\pi^*\O_X,
\end{align*}
where the composite map is induced by the ideal sheaf map $\II\to \O_X$ \cite[Caution II.7.12.2]{Hartshorne}.  Tensoring with $\pi^*\LL$, we obtain a commutative diagram
\begin{equation*}
\begin{tikzcd}
\pi^*(\LL\otimes \II) \arrow{r}{} \arrow[swap]{d}{}& \pi^*\LL\arrow{d}{}\\
\pi^*\LL\otimes \tilde{\II} \arrow{r}{} & \pi^*\LL
\end{tikzcd}
\end{equation*}
where the top map is induced by $\LL\otimes \II\to \LL$ and the vertical map on the right is the identity.  Using that $\pi_*$ is right adjoint to $\pi^*$ yields a diagram
\begin{equation*}
\begin{tikzcd}
\LL\otimes \II \arrow{r}{} \arrow[swap]{d}{}& \LL\arrow{d}{}\\
\pi_*(\pi^*\LL\otimes \tilde{\II}) \arrow{r}{} & \pi_*\pi^*\LL
\end{tikzcd}
\end{equation*}
which is commutative by the naturality of the adjunction maps, and where the vertical map on the left is the isomorphism of the projection formula \cite[Exercise II.5.1]{Hartshorne}.  Taking global sections, using the definition of the direct image sheaf, and reverting to the fuller notation, we obtain a commutative diagram
\begin{equation*}
\begin{tikzcd}
H^0(X,\O_X(N(1+\epsilon)A)\otimes \II_{j,v}^{N(\epsilon_{j,v}+\epsilon')}) \arrow{r}{} \arrow[swap]{d}{\cong}& H^0(X,\O_X(N(1+\epsilon)A))\arrow{d}{\pi_{j,v}^*}\\
H^0(\tilde X_{j,v}, \O_{\tilde X_{j,v}}(N((1+\epsilon)\pi_{j,v}^*A-(\epsilon_{j,v}+\epsilon')E_{j,v}))) \arrow{r}{}  & H^0(\tilde X_{j,v}, \O_{\tilde X_{j,v}}(N((1+\epsilon)\pi_{j,v}^*A)))
\end{tikzcd}
\end{equation*}
where the left-hand isomorphism comes from the projection formula and the horizontal maps are induced by the maps of ideal sheaves 
\begin{align*}
\II_{j,v}^{N(\epsilon_{j,v}+\epsilon')}&\to \O_X,\\
\O_{\tilde X_{j,v}}(-N(\epsilon_{j,v}+\epsilon')E_{j,v})&\to \O_{\tilde X_{j,v}}.
\end{align*}

Then the claim (in the language of sections) follows from the commutative diagram and the definition of $\tilde{F}^{(j)}_{j,v}$: there is an effective divisor $F_{j,v}\sim N(1+\epsilon)A$ on $X$ such that $\pi_{j,v}^*F_{j,v}=\tilde{F}^{(j)}_{j,v}$.

It is immediate that $F_{j,v}$ satisfies the conditions \eqref{c1} and \eqref{c2} above for $i=j$.   To complete the inductive definition, it remains to show that $F_{0,v}, \ldots, F_{j,v}, Y_{j+1,v},\ldots, Y_{n,v}$ are in general position.  Since $\tilde{F}^{(j)}_{0,v},\ldots, \tilde{F}^{(j)}_{j-1,v},\tilde{F}^{(j)}_{j,v},\tilde{Y}^{(j)}_{j+1,v},\ldots, \tilde{Y}^{(j)}_{n,v}$ are in general position on $\tilde{X}_{j,v}$ outside of $E_{j,v}$, and $\pi_{j,v}$ is an isomorphism above the complement of $Y_{j,v}$, it is clear that $F_{0,v}, \ldots, F_{j,v}, Y_{j+1,v},\ldots, Y_{n,v}$ are in general position outside of $Y_{j,v}$.  The full statement now follows from combining this with the fact that $Y_{j,v}$ is in general position with $F_{0,v},\ldots, F_{j-1,v},Y_{j+1,v},\ldots, Y_{n,v}$.  Thus, we obtain divisors $F_{0,v},\ldots, F_{n,v}$ with the required properties.

\par
We may now apply Theorem~\ref{EF} to the linearly equivalent divisors $F_{i,v}$, $i=0,\ldots,n$, $v\in S$, and $N(1+\epsilon)A$.  We obtain
\begin{align*}
\sum_{v\in S}\sum_{i=0}^n \lambda_{F_{i,v},v}(P)< (n+1+\epsilon)h_{N(1+\epsilon)A}(P)
\end{align*}
for all $P\in X(k)\setminus Z$ for some proper Zariski-closed subset $Z$ of $X$ containing the supports of all $F_{i,v}$, $v\in S$, $i=0,\ldots, n$.  By functoriality, additivity,  and the fact that local height functions attached to effective divisors are bounded from below outside their support, we have
\begin{align*}
\lambda_{F_{i,v},v}(\pi_{i,v}(P))&=\lambda_{\pi_{i,v}^*F_{i,v},v}(P)+O(1)\\
&\geq N(\epsilon_{i,v}+\epsilon')\lambda_{E_{i,v},v}(P)+O(1)\\
&= N(\epsilon_{i,v}+\epsilon')\lambda_{Y_{i,v},v}(\pi_{i,v}(P))+O(1)
\end{align*}
for all $P\in \tilde{X}_{i,v}(k)$ outside the support of $\pi_{i,v}^*F_{i,v}$.

Then there exists a Zariski closed subset $Z\subset X$ such that
\begin{equation*}
\sum_{v\in S}\sum_{i=0}^n (\epsilon_{i,v}+\epsilon')\lambda_{Y_{i,v},v}(P)< (1+\epsilon)(n+1+\epsilon)h_A(P)+O(1)
\end{equation*}
for all $P \in X(k)\setminus Z$.  Since $\epsilon, \epsilon'$, and $\epsilon_{Y_{i,v}}(A)-\epsilon_{i,v}$, for all $i$ and $v$, may be chosen (simultaneously) arbitrarily small, the result follows.
\end{proof}

We end this section by giving the short proof of Corollary \ref{repeated_cor}.

\begin{proof}[Proof of Corollary \ref{repeated_cor}]
Let $v\in S$.  If $\codim Y_v=r$ and $Y_{i,v}=Y_v$, $i=0,\ldots, r-1$, then the closed subschemes $Y_{0,v},\ldots, Y_{r-1,v}$ are in general position.  The result is now immediate from Theorem \ref{mthm}, after choosing the remaining closed subschemes $Y_{i,v}$ arbitrarily (so that the general position assumption is maintained) and using that local height functions associated to closed subschemes are bounded from below outside their support.
\end{proof}

\section{Comparing $\beta_{A,Y}$ and $\epsilon_Y(A)$}
\label{beta_ineq}
In this final section, we combine the method of proof of our main theorem with work of Autissier \cite{autissier} to establish inequality \eqref{Seshineq}.  We begin by recalling the setup of Autissier's filtrations.  Let $\LL$ be a line bundle on a non-singular projective variety $X$ of dimension $n$, and $D_1,\ldots, D_r$ ample effective divisors on $X$.  We assume that $h^0(X, \LL)\geq 1$, $D_1,\ldots, D_r$ are in general position on $X$, and $\cap_{i=1}^rD_i$ is non-empty.  Under our assumptions, the general position condition is equivalent to Autissier's assumption in \cite{autissier} that $D_1,\ldots, D_r$ intersect properly \cite[Remarque 2.3]{autissier}.

Let $\mathbb{R}^+=[0,\infty)$ and let
\begin{align*}
\Delta=\{\bt=(t_1,\ldots, t_r)\in (\mathbb{R}^+)^r\mid t_1+\cdots +t_r=1\}.
\end{align*}

For each $\bt\in \Delta$ and $x\in \mathbb{R}^+$ define
\begin{align}
\label{Ntx}
N(\bt,x)&=\{\mathbf{b}\in \mathbb{N}^r\mid t_1b_1+\cdots +t_rb_r\geq x\},\\
\II(\bt,x)&=\sum_{\mathbf{b}\in N(\bt, x)}\O_X\left(-\sum_{i=1}^rb_iD_i\right),\notag\\
\FF(\bt)_x&=H^0(X,\II(\bt,x)\otimes \LL).\notag
\end{align}

For a section $s\in H^0(X,\LL)\setminus\{0\}$, let $\mu_{\bt}(s)=\sup\{y\in \mathbb{R}^+\mid s\in \FF(\bt)_y\}$.  Let
\begin{align*}
\FF(\bt)=\frac{1}{h^0(X,\LL)}\int_0^\infty (\dim \FF(\bt)_x)dx.
\end{align*}

If $\BB=\{s_1,\ldots, s_l\}$ is a basis of $H^0(X,\LL)$ adapted to the filtration $(\FF(\bt)_x)_{x\in\mathbb{R}^+}$,  then \cite[Remarque 3.5]{autissier}
\begin{align*}
\FF(\bt)=\frac{1}{l}\sum_{k=1}^l\mu_{\bt}(s_k).
\end{align*}

We use the following theorem \cite[Th\'eor\`eme 3.6]{autissier}.

\begin{theorem}
The function $\FF$ is concave on $\Delta$.  In particular, for $\bt\in \Delta$,
\begin{align}
\label{Auteq}
\FF(\bt)\geq \min_{i=1,\ldots,r} \frac{1}{h^0(X,\LL)}\sum_{m\geq 1}h^0(X,\LL(-mD_i)).
\end{align}
\end{theorem}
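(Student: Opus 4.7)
The plan is to derive the ``in particular'' inequality as a formal consequence of concavity, and then to focus on the concavity itself.

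For the inequality, write $\bt \in \Delta$ as the convex combination $\sum_{i=1}^r t_i e_i$ of the vertices $e_i$; concavity then gives $\FF(\bt) \geq \sum_i t_i \FF(e_i) \geq \min_i \FF(e_i)$. The vertex values are computed directly: at $e_i$ one has $N(e_i, x) = \{\mathbf{b} \in \mathbb{N}^r : b_i \geq x\}$, and since the $D_j$ are effective, the largest summand (by containment) in $\II(e_i, x)$ is $\O_X(-\lceil x \rceil D_i)$, so $\II(e_i, x) = \O_X(-\lceil x \rceil D_i)$ for $x > 0$. Integrating yields $\FF(e_i) = h^0(X, \LL)^{-1} \sum_{m \geq 1} h^0(X, \LL(-mD_i))$, matching the bound in \eqref{Auteq}.

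For the concavity itself, the strategy is to exploit the adapted-basis formula $l \cdot \FF(\bt) = \sum_k \mu_\bt(s_k)$, together with the general inequality $\sum_{s \in \BB} \mu_\bt(s) \leq l \cdot \FF(\bt)$ for an arbitrary basis $\BB$ of $H^0(X,\LL)$ (with equality iff $\BB$ is adapted). Given $\bt, \bt' \in \Delta$ and $\bt'' = \lambda \bt + (1-\lambda) \bt'$, I would choose a common Jordan--H\"{o}lder refinement $\BB$ of the filtrations $(\FF(\bt)_x)$ and $(\FF(\bt')_x)$ and bound each $\mu_{\bt''}(s)$, for $s \in \BB$, in terms of $\mu_\bt(s)$ and $\mu_{\bt'}(s)$ using the elementary observation that if a single exponent vector $\mathbf{b}$ satisfies $\sum t_i b_i \geq a$ and $\sum t_i' b_i \geq b$, then $\sum t_i'' b_i \geq \lambda a + (1-\lambda) b$; this yields the relevant inclusions among the ideals $\II(\cdot, \cdot)$ at the level of generators.

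The principal obstacle is that for a fixed section $s$ the map $\bt \mapsto \mu_\bt(s)$ need not be concave, so concavity of $\FF$ cannot be reduced termwise to a property of individual basis elements. Locally on $\mathbb{A}^2$ with $D_1 = \{x=0\}$ and $D_2 = \{y=0\}$, for instance, $s = x^2 + y^2$ lies in $(x,y)^2 = \II((1/2,1/2),1)$ but in neither $(x)$ nor $(y)$, giving $\mu_{(1/2,1/2)}(s) \geq 1$ while $\mu_{e_1}(s) = \mu_{e_2}(s) = 0$. The concavity of $\FF$ must therefore arise from global cancellation across the entire adapted basis, and the cleanest route I would pursue is a Brunn--Minkowski / Okounkov-body argument: after passing to a high tensor power $\LL^{\otimes N}$, the family $\II(\bt, x)$ is encoded by a graded algebra supported on a fixed Okounkov-type polytope $\Sigma \subset \mathbb{R}^r$, and $\FF(\bt)$ reduces asymptotically to the integral of the linear functional $\mathbf{b} \mapsto \sum t_i b_i$ over $\Sigma$, which is manifestly concave in $\bt$ by standard convex geometry.
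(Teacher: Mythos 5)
The result you are asked to prove is quoted in the paper as Autissier's Th\'eor\`eme~3.6; the paper gives no proof of its own, so I am evaluating your proposal directly.

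Your derivation of the ``in particular'' inequality from concavity is correct: writing $\bt=\sum_i t_ie_i$, you need only the vertex values, and the identification $\II(e_i,x)=\O_X(-\lceil x\rceil D_i)$, hence $\FF(e_i)=h^0(X,\LL)^{-1}\sum_{m\geq 1}h^0(X,\LL(-mD_i))$, is right. The real issue is what you say about concavity itself.

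The termwise strategy you sketch in your second paragraph (common basis adapted to $\FF(\bt)$ and $\FF(\bt')$, plus the pointwise bound $\mu_{\bt''}(s)\geq\lambda\mu_\bt(s)+(1-\lambda)\mu_{\bt'}(s)$, plus the observation that $\frac{1}{l}\sum_k\mu_{\bt''}(s_k)\leq\FF(\bt'')$ for any basis) is in fact a complete proof, and you should not have abandoned it. Your stated ``principal obstacle'' does not exist: the example $s=x^2+y^2$, $D_1=\{x=0\}$, $D_2=\{y=0\}$ gives $\mu_{(1/2,1/2)}(s)=1$ and $\mu_{e_1}(s)=\mu_{e_2}(s)=0$, so $\mu_{(1/2,1/2)}(s)\geq\frac12\mu_{e_1}(s)+\frac12\mu_{e_2}(s)$ --- which is precisely the concavity inequality, not a violation of it. In fact $\bt\mapsto\mu_\bt(s)$ \emph{is} concave for every fixed nonzero $s$. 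Locally at a point $P\in X$, because the $D_i$ intersect properly, their local equations form part of a regular system of parameters, $\II(\bt,x)_P$ is the monomial ideal with exponent set $N(\bt,x)$, and (passing to the completion) $s_P$ lies in $\II(\bt,x)_P$ if and only if every exponent in the formal support of $s_P$ lies in $N(\bt,x)$. Hence the local jump number is $\mu^P_\bt(s)=\min_{\mathbf a\in\operatorname{supp}(s_P)}\sum_i t_ia_i$, a minimum of linear functionals in $\bt$ and therefore concave; and the global jump number is $\mu_\bt(s)=\inf_P\mu^P_\bt(s)$, an infimum of concave functions and hence concave. Feeding this into the common-adapted-basis identity gives $\FF(\bt'')\geq\frac{1}{l}\sum_k\mu_{\bt''}(s_k)\geq\lambda\FF(\bt)+(1-\lambda)\FF(\bt')$, which is the desired concavity.

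The Okounkov-body detour in your last paragraph should be dropped. Beyond being unnecessary, it is not adequate for this statement: the theorem asserts concavity of $\FF$ for the \emph{fixed} line bundle $\LL$, whereas an Okounkov-body argument only produces asymptotic statements about $\lim_{N\to\infty}N^{-1}\FF_{\LL^{\otimes N}}$, and you give no way to descend from that limit back to finite $N$. So the gap in your write-up is not in the strategy you first proposed, but in your incorrect reason for rejecting it; repair the analysis of your own example, verify the concavity of $\mu_\bt(s)$ as above, and the first strategy closes the argument.
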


Suppose that $D\sim dA$ for some positive integer $d$ and ample divisor $A$ on $X$.  We will use the formula
\begin{align}
\label{ADeq}
\lim_{N\to\infty}\frac{1}{Nh^0(X, NA)}\sum_{m\geq 1}h^0(X,NA-mD)=\frac{1}{d(n+1)},
\end{align}
which is an easy consequence of the asymptotic Riemann-Roch formula 
\begin{align*}
h^0(X,mA)=\frac{A^n}{n!}m^n+O(m^{n-1}).
\end{align*}

We are now in a position to prove inequality \eqref{Seshineq}, restated here as
\begin{theorem}\label{shineq_thm}
Let $X$ be a non-singular projective variety of dimension $n$ and let $Y$ be a closed subscheme of $X$ of codimension $r$.  Let $A$ be an ample divisor on $X$.  Then
\begin{align*}
\beta_{A,Y}\geq \frac{r}{n+1}\epsilon_Y(A).
\end{align*}
\end{theorem}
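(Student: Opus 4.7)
The plan is to combine the divisor-construction machinery from the proof of Theorem \ref{mthm} with Autissier's filtration theorem, applied with the symmetric weight vector. Fix small rationals $\delta>0$ and $\gamma$ slightly below $\epsilon_Y(A)$, with $N\gamma\in\mathbb{Z}$ for some sufficiently divisible positive integer $N$. Since $\codim Y=r$, the $r$-tuple $(Y,\ldots,Y)$ is in general position in the sense of Definition \ref{gen_pos_def}. Adapting (and truncating at step $r-1$) the inductive construction in the proof of Theorem \ref{mthm}, I obtain effective divisors $F_1,\ldots,F_r$ on $X$ satisfying $F_i\sim N(1+\delta)A$, $\pi^*F_i\geq N\gamma E$ on $\tilde X$, and $F_1,\ldots,F_r$ in general position.

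Since the $F_i$ are ample, in general position, and $r\leq n$, the intersection $\cap_i F_i$ is non-empty, so Autissier's hypotheses are satisfied with $D_i=F_i$. I take the symmetric weight $\bt=(1/r,\ldots,1/r)\in \Delta$ and $\LL=\O_X(N''A)$ for an asymptotic parameter $N''$. As the $F_i$ are linearly equivalent, the minimum in \eqref{Auteq} is uniformly attained, and combining with \eqref{ADeq} gives the lower bound
\begin{align*}
\liminf_{N''\to\infty}\frac{\FF(\bt)}{N''}\geq \lim_{N''\to\infty}\frac{1}{N''h^0(X,N''A)}\sum_{m\geq 1}h^0(X,N''A-mF_1)=\frac{1}{N(1+\delta)(n+1)}.
\end{align*}

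The main obstacle is producing a matching \emph{upper} bound on $\FF(\bt)$ in terms of the $\beta$-sum defining $\beta_{A,Y}$. For $\bt=(1/r,\ldots,1/r)$, one has $N(\bt,x)=\{\mathbf{b}\in\mathbb{N}^r:\sum b_i\geq rx\}$, and for any such $\mathbf{b}$ the bound $\pi^*F_i\geq N\gamma E$ gives $\sum b_i\pi^*F_i\geq N\gamma\sum b_i\cdot E\geq \lceil N\gamma rx\rceil E$, whence $\pi^{-1}\II(\bt,x)\cdot \O_{\tilde X}\subseteq \O_{\tilde X}(-\lceil N\gamma rx\rceil E)$. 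Because $X$ is smooth, $\pi_*\O_{\tilde X}=\O_X$, so the projection formula provides an isomorphism $\pi^*\colon H^0(X,\O_X(N''A))\xrightarrow{\sim} H^0(\tilde X,N''\pi^*A)$, and pullback then induces an injection $\FF(\bt)_x\hookrightarrow H^0(\tilde X,N''\pi^*A-\lceil N\gamma rx\rceil E)$. Integrating over $x$ and changing variables via $y=N\gamma rx$ yields
\begin{align*}
\FF(\bt)\leq \frac{1}{N\gamma r\cdot h^0(X,N''A)}\sum_{m\geq 1}h^0(\tilde X,N''\pi^*A-mE),
\end{align*}
so that $\limsup_{N''\to\infty}\FF(\bt)/N''\leq \beta_{A,Y}/(N\gamma r)$.

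Combining the two bounds gives $\frac{1}{N(1+\delta)(n+1)}\leq \frac{\beta_{A,Y}}{N\gamma r}$, i.e., $\beta_{A,Y}\geq \frac{\gamma r}{(1+\delta)(n+1)}$. Letting $\delta\to 0$ and $\gamma\to \epsilon_Y(A)$ delivers the desired inequality $\beta_{A,Y}\geq \frac{r}{n+1}\epsilon_Y(A)$.
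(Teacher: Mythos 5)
Your proposal is correct and follows essentially the same strategy as the paper: construct $F_1,\ldots,F_r\sim M(1+\delta)A$ in general position with $\pi^*F_i\geq M\gamma E$, apply Autissier's filtration with the symmetric weight $\bt=(1/r,\ldots,1/r)$ to $\LL=\O_X(N''A)$, bound $\FF(\bt)$ from below via \eqref{Auteq} and \eqref{ADeq}, bound it from above by the $\beta$-sum, and pass to limits. Your phrasing of the upper bound via integrating $\dim\FF(\bt)_x$ and pulling sections back directly through the inclusion $\pi^{-1}\II(\bt,x)\cdot\O_{\tilde X}\subseteq\O_{\tilde X}(-\lceil N\gamma rx\rceil E)$ is an equivalent (and arguably slightly cleaner) reformulation of the paper's count of the adapted basis elements $s_k$ weighted by $\mu_{\bt}(s_k)$ and its detour through $\II_Y^a$ and the projection formula.
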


\begin{proof} Let $\pi:\tilde X\to X$ be the blowing-up of $X$ along $Y$, and let $E$ be the associated divisor as above.
Let $\epsilon$ and $\epsilon_{Y}' \leq \epsilon_Y(A)$ be positive rational numbers.
As in the proof of Theorem \ref{mthm}, for all sufficiently small $\epsilon'$, and for any sufficiently large and divisible positive integer $M$ (depending on $\epsilon$ and $\epsilon'$), we construct effective divisors $F_1,\ldots, F_r$ on $X$ such that
\begin{enumerate}
\item $M(1+\epsilon)A\sim F_i$, $i=1,\ldots, r$.
\item $\pi^*F_i\geq M(\epsilon_{Y}'+\epsilon')E$, $i=1,\ldots, r$.
\item The divisors $F_1,\ldots, F_r$ are in general position on $X$.
\end{enumerate}

Let $N$ be a positive integer and $\LL=\O_X(NA)$.  We take $\bt_0=(\frac{1}{r},\ldots, \frac{1}{r})$ and $D_i=F_i, i=1,\ldots, r$.  Let $\BB=\{s_1,\ldots, s_l\}$ be a basis of $H^0(X,\LL)$ adapted to the filtration $(\FF(\bt_0)_x)_{x\in\mathbb{R}^+}$, where $l=h^0(X,\LL)$.  Then
\begin{align*}
\FF(\bt_0)=\frac{1}{l}\sum_{k=1}^l\mu_{\bt_0}(s_k).
\end{align*}

From the construction of $F_i$ in the proof of Theorem \ref{mthm}, we have
\begin{align*}
\II_Y^{M(\epsilon_{Y}'+\epsilon')}\supset \O_X\left(-F_i\right), \quad i=1,\ldots, r.
\end{align*}
Therefore, if $s\in \FF(\bt_0)_y$ then
\begin{align*}
s\in H^0\left(X, \LL\otimes\II_Y^{\left\lceil ry\right\rceil M(\epsilon_{Y}'+\epsilon')}\right),
\end{align*}
where the use of $\lceil ry\rceil$ is justified by the observation that if 
\begin{align*}
t_1b_1+\cdots +t_rb_r=\frac{1}{r}(b_1+\cdots +b_r)\geq x 
\end{align*}
in \eqref{Ntx}, then $b_1+\cdots +b_r\geq \lceil rx\rceil$ as $b_1,\ldots, b_r\in\mathbb{N}$ (for the same reason, $r\mu_{\bt_0}(s)$ is an integer for any $s\in H^0(X,\LL)$).

From the projection formula (and an appropriate choice of $M$),
\begin{align*}
H^0(X, \LL\otimes\II_Y^{\left\lceil ry\right\rceil M(\epsilon_{Y}'+\epsilon')})\cong H^0(\tilde{X},\O_{\tilde{X}}(N\pi^*A-\left\lceil ry\right\rceil M(\epsilon_{Y}'+\epsilon')E)).
\end{align*}

It follows that a given section $s_k$ in $\BB$ corresponds to an element of $H^0(\tilde{X},\O_{\tilde{X}}(N\pi^*A-mE))$ for at least $m=1,\ldots, rM(\epsilon_{Y}'+\epsilon')\mu_{\bt_0}(s_k)$, and therefore

\begin{align*}
\frac{\sum_{m=1}^\infty h^0(\tilde{X},N\pi^*A-mE)}{h^0(X,NA)}\geq rM(\epsilon_{Y}'+\epsilon')\frac{1}{l}\sum_{k=1}^l\mu_{\bt_0}(s_k)=rM(\epsilon_{Y}'+\epsilon')\FF(\bt_0).
\end{align*}

From \eqref{Auteq} we find that 
\begin{align*}
\FF(\bt_0)\geq \min_{i=1,\ldots,r} \frac{1}{l}\sum_{m\geq 1}h^0(X,\LL(-mF_i)).
\end{align*}
By \eqref{ADeq}, for any $\epsilon''>0$ and sufficiently large $N$, we have
\begin{align*}
\frac{1}{Nh^0(X,\LL)}\sum_{m\geq 1}h^0(X,\LL(-mF_i))\geq \frac{1}{M(n+1)(1+\epsilon)}-\epsilon'', \quad i=1,\ldots, r.
\end{align*}
Then for sufficiently large $N$,
\begin{align*}
\frac{\sum_{m=1}^\infty h^0(\tilde{X},N\pi^*A-mE)}{Nh^0(X,NA)}\geq \frac{r(\epsilon_{Y}'+\epsilon')}{(n+1)(1+\epsilon)}-rM(\epsilon_{Y}'+\epsilon')\epsilon''.
\end{align*}

Since we may choose $\epsilon$, $\epsilon'$, and $\epsilon_Y(A)-\epsilon_Y'$ arbitrarily small, and then choose $\epsilon''$ so that $rM(\epsilon_{Y}'+\epsilon')\epsilon''$ is arbitrarily small, we find that

\begin{align*}
\beta_{A,Y}=\lim_{N\to\infty} \frac{\sum_{m=1}^\infty h^0(\tilde{X},N\pi^*A-mE)}{Nh^0(X,NA)}\geq \frac{r}{n+1}\epsilon_Y(A)
\end{align*}
as desired.
\end{proof}

\end{document}